\def\journal@name{}\makeatother
\numberwithin{equation}{section}
\theoremstyle{plain}
\newtheorem{tm}{Theorem}[section]
\newtheorem{lm}[tm]{Lemma}
\newtheorem{pr}[tm]{Proposition}
\theoremstyle{definition}
\newtheorem{re}[tm]{Remark}
\newtheorem{df}[tm]{Definition}
\newcommand{\R}{\mathbb{R}}
\newcommand{\N}{\mathbb{N}}
\newcommand{\dd}{\mathrm{d}}
\newcommand{\ve}{\varepsilon}
\newcommand{\dis}{\displaystyle}
\newcommand{\del}{\partial}
\newcommand{\Var}{\mathrm{Var}}
\newcommand{\nn}{\nonumber}
\newcommand{\E}{\mathbb{E}}
\begin{document}

\begin{frontmatter}
\title{Asymptotic behaviors of fractional binomial distributions derived from the generalized binomial theorem}
\runtitle{Asymptotic behaviors of fractional binomial distributions}

\begin{aug}
\author[A]{\inits{M.}\fnms{Masanori}~\snm{Hino}\ead[label=e1]{hino@math.kyoto-u.ac.jp}}
\author[B]{\inits{R.}\fnms{Ryuya}~\snm{Namba}\ead[label=e2]{rnamba@cc.kyoto-su.ac.jp}}
\address[A]{Department of Mathematics, Graduate School of Science, Kyoto University, Kyoto 606-8502, Japan\printead[presep={,\ }]{e1}}

\address[B]{Department of Mathematics, Faculty of Science, Kyoto Sangyo University, Kyoto 603-8555, Japan\printead[presep={,\ }]{e2}}
\end{aug}

\begin{abstract}
A fractional binomial distribution, introduced by Hino and Namba (2024) via the generalized binomial theorem, is a fractional variant of the classical binomial distribution. Building upon previous work that established limit theorems, such as the weak law of large numbers and the central limit theorem, for fractional binomial distributions, this paper further investigates their asymptotic behaviors. Specifically, we derive the large and moderate deviation principles and a Berry--Esseen type estimate for these distributions.
\end{abstract}

\begin{keyword}
\kwd{Berry--Esseen type estimate}
\kwd{fractional binomial distribution}
\kwd{generalized binomial theorem}
\kwd{large and moderate deviations}
\end{keyword}

\end{frontmatter}


\section{Introduction}\label{Sect:Introduction}
The \emph{fractional binomial distribution} $\mu_{\alpha,x}^{(n)}$, defined on the set $\{0,1,\dots,n\}$ with parameters $n\in\N$, $x\in[0,1]$, and $\alpha>0$, and derived from the generalized binomial theorem, was introduced in \cite{HN}. This distribution, inspired by the neo-classical inequality in rough path theory~\cite{Lyons}, serves as a natural extension of the classical binomial distribution. 
It is defined as 
\[
\sum_{j=0}^n \binom{\alpha n}{\alpha j}x^{\alpha j} (1-x)^{\alpha(n-j)}\delta_j
\]
divided by the normalizing constant, where $\delta_j$ denotes the Dirac measure at $j$.
In particular, $\mu_{1,x}^{(n)}$ coincides with the binomial distribution $\mathrm{Bin}(n,x)$, and its probability mass function depends smoothly on $x$ and $\alpha$. A key feature of this distribution is that its mean is asymptotically equal to that of $\mathrm{Bin}(n,x)$ as $n \to \infty$, but the variance of $\mu_{\alpha, x}^{(n)}$ is 
asymptotically equal to $1/\alpha$ times that of $\mathrm{Bin}(n,x)$. 
More precisely, as demonstrated in \cite[Corollary~2.8]{HN}:  
\begin{itemize}
\item the mean of $\mu_{\alpha, x}^{(n)}$ is $nx + O(e^{-\delta n})$, and 
\item the variance of $\mu_{\alpha, x}^{(n)}$ is $\frac{1}{\alpha}nx(1-x)+O(e^{-\delta n})$
\end{itemize}
as $n \to \infty$ for some $\delta>0$. 
Thus, the fractional binomial distributions offer flexible and novel count data distributions capable of accounting for both overdispersions and underdispersions. 

It is therefore a natural question to investigate the asymptotic behaviors of this new probability distribution. In \cite{HN}, several basic limit theorems were established, including the weak law of large numbers, the central limit theorem, and the law of small numbers.
To further deepen our understanding of the properties of these distributions, this paper investigates more advanced asymptotic behaviors. Specifically, this paper proves the large and moderate deviation principles for $\mu_{\alpha,x}^{(n)}$, along with a Berry--Esseen type estimate for $\mu_{\alpha,x}^{(n)}$, which quantitatively evaluates the error in the normal approximation. Interestingly, the rate functions in the large and moderate deviations differ from the classical ones by the factor of $\alpha$, whereas the Berry--Esseen type estimate is of the same form as the classical one. Given that $\mu_{\alpha,x}^{(n)}$ is not expected to be the law of the sum of independent random variables, the standard proof applicable to binomial distributions is not valid in our case. Instead, we utilize explicit expressions for the moment generating function and characteristic function of $\mu_{\alpha,x}^{(n)}$. Although these expressions are rather complicated, they can be quantitatively evaluated.
We anticipate that this clarification of asymptotic behaviors will lead potential applications in probabilistic and statistical models of count data.

The remainder of this paper is organized as follows. 
Section~\ref{Sect:results} provides the settings along with the details of the main results.
In Section~\ref{Sect:LDP-MDP}, we review the general theory of 
large deviation principles and the G\"artner--Ellis theorem.
Section~\ref{Sect:fractional-binomial-distribution} presents an explicit expression for the moment generating function of $\mu_{\alpha, x}^{(n)}$.
In Section~\ref{Sect:LDPs and MDPs}, we prove the large and moderate deviation principles.
Section~\ref{Sect:Berry-Esseen} provides the proof of a Berry--Esseen type estimate. 
Finally, Section~\ref{Sect:Remarks} offers some concluding remarks. 

\subsection*{Notation}
We use the following notation.
\begin{itemize}
    \item
        $\N=\{1, 2, 3, \dots\}$ denotes the set of natural numbers.
    \item
        $\mathrm{Re}(\omega)$ denotes the real part of 
        a complex number $\omega$. 
    \item 
        For $x, y \in \R$, we define 
        $x \vee y=\max\{x, y\}$ and $x \wedge y=\min\{x, y\}$. 
    \item 
        For $x \in \R$, $\lfloor x \rfloor$ denotes the greatest integer less than or equal to $x$, and $\lceil x \rceil$ denotes        the smallest integer greater than or 	equal to $x$. 
        \item 
        Let $f(n)$ and $g(n)$ be functions defined on $\N$. 
        Then, $f(n)=O(g(n))$ as $n \to \infty$
        signifies the big-$O$ notation. 
    \item 
        For a set $A \subset \R$, $A^\circ$ and $\del A$ denote the interior and the boundary of $A$, respectively. 
    \item 
        Positive constants, for which concrete expressions are not critical, are denoted by $C_{i}$ and $\delta_i$, representing the $i$-th positive constants. 
\end{itemize}



\section{Settings and statement of main results}\label{Sect:results}
We begin with some conventions.
We define 
    \[
    \binom{w}{z}:=\frac{\Gamma(w+1)}{\Gamma(z+1)\Gamma(w-z+1)}
    \]
for $w \in \mathbb{C} \setminus \{-k \mid k \in \N\}$
and $z \in \mathbb{C}$, 
where $\Gamma(\cdot)$ denotes the gamma function. 
If $\infty$ appears in the denominator of the right-hand side of the above, 
then $\binom{w}{z}$ is regarded as zero. 
Also, $0^0$ is defined as 1. 
The fractional binomial distribution is then defined as follows.

\begin{df}[{\cite[Definition~1.3]{HN}}]
\label{Def:fractional-binomial-distribution}
    Let $\alpha > 0$, $n \in \N$, and $x \in [0, 1]$. 
    A probability measure $\mu_{\alpha, x}^{(n)}$ on $\{0, 1, 2, \dots, n\}$, 
    called the $\alpha$-\emph{fractional binomial distribution}, is defined as 
    \[
        \mu_{\alpha, x}^{(n)}
        =\frac{\alpha}{Z_{\alpha, x}^{(n)}}
        \sum_{j=0}^n\binom{\alpha n}{\alpha j} x^{\alpha j}(1-x)^{\alpha(n-j)}\delta_j,
    \]
    where $Z_{\alpha, x}^{(n)}>0$ is the normalizing constant, that is,
    \[
    Z_{\alpha, x}^{(n)}=\alpha\sum_{j=0}^n \binom{\alpha n}{\alpha j} x^{\alpha j}(1-x)^{\alpha(n-j)}.
    \] 
\end{df}

\noindent
We note that $\mu_{1, x}^{(n)}$ coincides with the binomial distribution $\mathrm{Bin}(n,x)$, which means that  
$\mu_{\alpha, x}^{(n)}$ is regarded as 
a fractional variant of the binomial distribution. 

Let $S_{\alpha, x}^{(n)}$ denote a random variable 
whose law is $\mu_{\alpha, x}^{(n)}$. 
As shown in \cite[Corollary~2.8]{HN}, we have 
    \begin{equation}
        \mathbb{E}[S_{\alpha, x}^{(n)}] = nx + O(e^{-\delta n}) \quad\text{and}\quad
        \mathrm{Var}(S_{\alpha, x}^{(n)}) = \frac{1}{\alpha}nx(1-x)+O(e^{-\delta n})
        \label{Eq:mean-variance-fractional}
    \end{equation}
as $n \to \infty$, where $\delta>0$ can be chosen uniformly 
on every compact subset of $x \in (0, 1)$
and every compact subset of $\alpha \in (0, \infty)$. 
In our preceding paper \cite{HN}, 
we have proved some limit theorems for $\mu_{\alpha, x}^{(n)}$. 
We here present two of them, the weak law of large numbers (WLLN in short) and 
the central limit theorem (CLT in short).

\begin{tm}[WLLN for $\mu_{\alpha, x}^{(n)}$, {\cite[Theorem~3.1]{HN}}]
\label{Thm:WLLN}
Let $\alpha>0$, $n \in \N$, and $x \in [0, 1]$. 
Then, for every $\ve>0$ and a compact subset $K$ of $(0, \infty)$, we have  
    \begin{equation*}\label{Eq:WLLN}
        \lim_{n \to \infty}
        \sup_{\alpha \in K} \sup_{x \in [0, 1]} \mathbb{P}
        \left(\left|\frac{1}{n}S_{\alpha, x}^{(n)}-x\right| \ge \ve\right)=0. 
    \end{equation*}
\end{tm}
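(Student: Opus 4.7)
The strategy is Chebyshev's inequality combined with the mean and variance asymptotics \eqref{Eq:mean-variance-fractional}.

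Fix $\ve > 0$ and a compact subset $K \subset (0, \infty)$. For $\alpha \in K$ and $x \in (0, 1)$, I would first decompose
\[
\mathbb{E}[(S_{\alpha, x}^{(n)} - nx)^2] = \Var(S_{\alpha, x}^{(n)}) + (\mathbb{E}[S_{\alpha, x}^{(n)}] - nx)^2 = \frac{nx(1-x)}{\alpha} + O(e^{-\delta n})
\]
by \eqref{Eq:mean-variance-fractional}. Since $x(1-x) \le 1/4$ and $\alpha \ge \min K > 0$, Chebyshev's inequality then gives
\[
\mathbb{P}\left(\left|\frac{S_{\alpha, x}^{(n)}}{n} - x\right| \ge \ve\right) \le \frac{\mathbb{E}[(S_{\alpha, x}^{(n)} - nx)^2]}{n^2 \ve^2} \le \frac{1}{4 n \alpha \ve^2} + O\!\left(\frac{e^{-\delta n}}{n^2}\right),
\]
which vanishes as $n \to \infty$ uniformly over $\alpha \in K$ and over $x$ in any fixed compact subset of $(0, 1)$.

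It then remains to upgrade the uniformity from compact subsets of $(0, 1)$ to the full interval $[0, 1]$. At the endpoints, one has $\mu_{\alpha, 0}^{(n)} = \delta_0$ and $\mu_{\alpha, 1}^{(n)} = \delta_n$, so the probability in question vanishes identically. For $x$ in a small neighborhood of the endpoints, say $x \in [0, \ve/2]$, the event $|S_{\alpha, x}^{(n)}/n - x| \ge \ve$ forces $S_{\alpha, x}^{(n)} \ge n\ve/2$ (since $S_{\alpha,x}^{(n)} \ge 0$), and I would bound $\mathbb{P}(S_{\alpha, x}^{(n)} \ge n\ve/2)$ directly from the explicit probability mass function: for small $x$ the weight $\binom{\alpha n}{\alpha j} x^{\alpha j}(1-x)^{\alpha(n-j)}$ carries the factor $x^{\alpha j}$, while $Z_{\alpha, x}^{(n)} \ge \alpha (1-x)^{\alpha n}$ from the $j = 0$ term, yielding a tail bound that decays uniformly for small $x$. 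A symmetric argument covers $x$ near $1$.

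I expect the main obstacle to be precisely this endpoint analysis: the error terms in \eqref{Eq:mean-variance-fractional} are only claimed to hold uniformly on compact subsets of $(0, 1)$, so the interior Chebyshev bound alone does not suffice for $x$ close to $0$ or $1$. Overcoming this requires either a direct estimate of the tail of the probability mass function near the endpoints along the lines sketched above, or a sharpening of \eqref{Eq:mean-variance-fractional} to hold uniformly on $[0, 1]$ (which should be possible given the degeneracy of $\mu_{\alpha,x}^{(n)}$ at $x=0,1$). Once either such bound is in place, combining the degenerate endpoints, the near-endpoint tail estimate, and the interior Chebyshev bound produces the desired uniform convergence.
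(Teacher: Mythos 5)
This theorem is stated as a recalled result from \cite[Theorem~3.1]{HN}; the present paper does not reprove it, so there is no in-paper argument to compare against. Evaluated on its own, your plan is correct: Chebyshev combined with the moment asymptotics \eqref{Eq:mean-variance-fractional} is the natural core step, and you correctly diagnose the one real obstacle, namely that \eqref{Eq:mean-variance-fractional} is only claimed uniformly on compacts of $(0,1)$, so a separate endpoint argument is unavoidable.

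The endpoint sketch you give does work, but one detail should be made explicit before it can be called a proof: you need a uniform-in-$\alpha$ bound on the fractional binomial coefficients $\binom{\alpha n}{\alpha j}$, which is not as immediate as in the integer case. A clean route is to observe that $\alpha\binom{\alpha n}{\alpha j}(1/2)^{\alpha n}\le Z_{\alpha,1/2}^{(n)}$, and $Z_{\alpha,1/2}^{(n)}$ is bounded uniformly in $n$ and in $\alpha\in K$ by \eqref{Eq:Z}; hence $\binom{\alpha n}{\alpha j}\le C\,2^{\alpha n}$ with $C$ depending only on $K$. Combining this with your lower bound $Z_{\alpha,x}^{(n)}\ge\alpha(1-x)^{\alpha n}$ gives, for $x$ small enough that $x/(1-x)\le 1/2$,
\[
\mathbb{P}\bigl(S_{\alpha,x}^{(n)}\ge n\ve/2\bigr)
\le\sum_{j\ge n\ve/2}\binom{\alpha n}{\alpha j}\Bigl(\frac{x}{1-x}\Bigr)^{\alpha j}
\le C'\Bigl[2\Bigl(\frac{x}{1-x}\Bigr)^{\ve/2}\Bigr]^{\alpha n},
\]
which decays geometrically, uniformly for $\alpha\in K$, once $x$ is further restricted so that $2(x/(1-x))^{\ve/2}<1$. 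Together with the degenerate cases $x\in\{0,1\}$, the symmetric bound near $x=1$, and the Chebyshev estimate on the remaining compact piece of $(0,1)$, this closes the argument; with that detail filled in, the proposal is complete.
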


\begin{tm}[CLT for $\mu_{\alpha, x}^{(n)}$, {\cite[Theorem~3.2]{HN}}]
\label{Thm:CLT}
    Let $\alpha>0$, $n \in \N$, and $x \in (0, 1)$.
    Let $\widetilde{\mu}_{\alpha, x}^{(n)}$ denote the probability law of the random variable 
        \begin{equation}\label{Eq:normalized-mu}
            \widetilde{S}_{\alpha, x}^{(n)}
            :=\frac{S_{\alpha, x}^{(n)} - \mathbb{E}[S_{\alpha, x}^{(n)}]}{\sqrt{\mathrm{Var}(S_{\alpha, x}^{(n)})}}.
        \end{equation}
    Then, $\widetilde{\mu}_{\alpha, x}^{(n)}$
    converges weakly to the standard normal distribution $N(0, 1)$
    as $n \to \infty$. 
\end{tm}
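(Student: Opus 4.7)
The plan is to prove the CLT by the characteristic-function method and Lévy's continuity theorem, since $\mu_{\alpha,x}^{(n)}$ is not the law of a sum of independent random variables and no standard CLT applies directly. The essential new ingredient is a tractable closed form for the moment generating function of $\mu_{\alpha,x}^{(n)}$, whose derivation Section~\ref{Sect:fractional-binomial-distribution} of the paper is dedicated to.

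From Definition~\ref{Def:fractional-binomial-distribution}, the MGF is
\[
M_n(\theta):=\E\bigl[e^{\theta S_{\alpha,x}^{(n)}}\bigr]=\frac{\alpha}{Z_{\alpha,x}^{(n)}}\sum_{j=0}^n\binom{\alpha n}{\alpha j}\bigl(xe^{\theta/\alpha}\bigr)^{\alpha j}(1-x)^{\alpha(n-j)}.
\]
When $\alpha\in\N$, a root-of-unity filter converts the inner sum into $\frac{1}{\alpha}\sum_{k=0}^{\alpha-1}\bigl((1-x)+\zeta^k xe^{\theta/\alpha}\bigr)^{\alpha n}$ with $\zeta=e^{2\pi\ii/\alpha}$, and for general $\alpha>0$ I would expect an analogous identity to follow from the generalized binomial theorem that underpins this paper. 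In either case, the principal $k=0$ term equals $\bigl((1-x)+xe^{\theta/\alpha}\bigr)^{\alpha n}$ and exponentially dominates the others because $|(1-x)+\zeta^k x|<1$ for $k\neq 0$ and $x\in(0,1)$; applying the same analysis to $Z_{\alpha,x}^{(n)}$ (the case $\theta=0$) should yield $M_n(\theta)=\bigl((1-x)+xe^{\theta/\alpha}\bigr)^{\alpha n}\bigl(1+O(e^{-\delta n})\bigr)$ uniformly in $\theta$ on compact sets of $\mathbb{C}$.

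Given this representation, the CLT reduces to a Taylor expansion. Writing $m_n:=\E[S_{\alpha,x}^{(n)}]$ and $\sigma_n^2:=\Var(S_{\alpha,x}^{(n)})$, substituting $\theta=\ii t/\sigma_n$ gives
\[
\log M_n\bigl(\ii t/\sigma_n\bigr)=\alpha n\log\bigl((1-x)+xe^{\ii t/(\alpha\sigma_n)}\bigr)+O(e^{-\delta n}),
\]
and expanding the inner logarithm to second order in $t/(\alpha\sigma_n)$, then invoking \eqref{Eq:mean-variance-fractional} (so that $m_n = nx + O(e^{-\delta n})$ and $\sigma_n^2 = nx(1-x)/\alpha + O(e^{-\delta n})$), yields $\log\E\bigl[e^{\ii t\widetilde{S}_{\alpha,x}^{(n)}}\bigr] = \log M_n(\ii t/\sigma_n) - \ii t m_n/\sigma_n \to -t^2/2$ for each $t\in\R$, and Lévy's continuity theorem completes the argument. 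The main obstacle is making the explicit MGF formula and the exponential decay of subdominant terms rigorous for arbitrary $\alpha>0$, since for non-integer $\alpha$ the root-of-unity filter is not directly available and must be replaced by a contour-integral or generalized-binomial analogue; however, the $O(e^{-\delta n})$ error already present in \eqref{Eq:mean-variance-fractional} indicates that the required quantitative estimates are within reach using the analytic machinery developed in \cite{HN}.
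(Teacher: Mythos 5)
Your strategy---closed form for the MGF via the generalized binomial theorem, isolate the dominant $(1-x+xe^{\theta/\alpha})^{\alpha n}$ term, Taylor-expand, apply L\'evy's continuity theorem---is exactly the one this paper's machinery is built around, and the ``generalized-binomial analogue'' you conjecture must exist for non-integer $\alpha$ is supplied by Theorem~\ref{Thm:generalization-binomial-theorem-HH} and Theorem~\ref{Thm:MGF-fractional-binomial}: the correction to the lead term is a finite sum over $K_\alpha\setminus\{1\}$ plus an explicit integral, both exponentially small. (This paper cites Theorem~\ref{Thm:CLT} from \cite{HN} rather than re-proving it, but Step~2 of the proof of Lemma~\ref{Lem:CF-estimate} carries out precisely the Taylor expansion you describe.) Two caveats. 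First, the assertion that $\log M_n(it/\sigma_n)=\alpha n\,\log\bigl(1-x+xe^{it/(\alpha\sigma_n)}\bigr)+O(e^{-\delta n})$ silently absorbs the cubic Taylor remainder: expanding the inner logarithm to second order leaves a remainder of size $\alpha n\cdot O\bigl((t/\sigma_n)^3\bigr)=O(t^3/\sqrt{n})$, which is polynomial, not exponential. For the CLT this is harmless since $t$ is fixed, but it is exactly the term that forces the Berry--Esseen rate in Theorem~\ref{Thm:Berry-Esseen} to be $1/\sqrt{n}$ rather than exponential, so your error bookkeeping should read $O(e^{-\delta n})+O(t^3/\sqrt{n})$. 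Second, the characteristic-function expression \eqref{Eq:fractional-CF-explicit} has apparent singularities and is valid only for $\xi$ in a neighborhood $(-\theta_\alpha,\theta_\alpha)$ of the origin (see the remark after Theorem~\ref{Thm:MGF-fractional-binomial}), so ``uniformly on compact sets of $\mathbb{C}$'' overreaches; since $t/\sigma_n\to 0$ you do stay inside this neighborhood and the argument goes through, but the uniformity is not free.
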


The main results in this paper based on these theorems are summarized as follows. 
\smallskip

\noindent
{(I)}
\textit{Large and moderate deviation principles for $\mu_{\alpha, x}^{(n)}$.} 

\begin{tm}[LDP for $\mu_{\alpha, x}^{(n)}$]
\label{Thm:LDP-fractional}
    Let $\alpha>0$, $n \in \N$, and $x \in (0, 1)$.
    Then, the sequence of laws of 
    $S_{\alpha, x}^{(n)}/n$, $n \in \N$, satisfies the large deviation principle (LDP) 
    with speed $n$ and good rate function 
        \begin{equation}\label{Eq:rate-function-LDP}
            I_{\alpha, x}^{(\mathrm{L})}(z) 
            =\begin{cases}
            \displaystyle 
            \alpha \left\{z \log \frac{z}{x} 
            +  (1-z)\log\frac{1-z}{1-x}\right\} & \text{if }z \in [0, 1], \\
            +\infty & \text{otherwise}.
            \end{cases}
        \end{equation}
    Namely, the following two properties hold. 
    \begin{enumerate}
    \item[{\rm (1)}] For any closed set $F \subset \R$, we have 
        \[
        \limsup_{n \to \infty} \frac{1}{n}\log 
        \mathbb{P}\left(\frac{S_{\alpha, x}^{(n)}}{n} \in F\right)\le -\inf_{z \in F}I_{\alpha, x}^{({\rm L})}(z).
        \]
    \item[{\rm (2)}]
    For any open set $G \subset \R$, we have 
        \[
        \liminf_{n \to \infty} \frac{1}{n}\log \mathbb{P}\left(\frac{S_{\alpha, x}^{(n)}}{n} \in G\right) \ge -\inf_{z \in G}I_{\alpha, x}^{({\rm L})}(z).
        \]
    \end{enumerate}
\end{tm}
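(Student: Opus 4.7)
The plan is to apply the G\"artner--Ellis theorem (reviewed in Section~\ref{Sect:LDP-MDP}) to the sequence $\{S_{\alpha,x}^{(n)}/n\}_{n\in\N}$. The main step is to compute the limiting logarithmic moment generating function
\[
\Lambda_{\alpha,x}(\lambda) := \lim_{n\to\infty} \frac{1}{n} \log \mathbb{E}\bigl[e^{\lambda S_{\alpha,x}^{(n)}}\bigr], \qquad \lambda \in \R,
\]
and to verify that it is finite, lower semicontinuous, and essentially smooth, so that the LDP holds with good rate function $\Lambda_{\alpha,x}^{*}$, the Legendre--Fenchel transform of $\Lambda_{\alpha,x}$.

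To evaluate the limit, I plan to exploit the explicit moment generating function worked out in Section~\ref{Sect:fractional-binomial-distribution}. Writing $e^{\lambda j} = (e^{\lambda/\alpha})^{\alpha j}$ and absorbing this factor into $x^{\alpha j}$, then setting
\[
p_\lambda := 1-x + xe^{\lambda/\alpha} \quad \text{and} \quad y_\lambda := \frac{xe^{\lambda/\alpha}}{p_\lambda} \in (0,1),
\]
the MGF factors cleanly as
\[
\mathbb{E}\bigl[e^{\lambda S_{\alpha,x}^{(n)}}\bigr] = \frac{Z_{\alpha,y_\lambda}^{(n)}}{Z_{\alpha,x}^{(n)}} \, p_\lambda^{\alpha n}.
\]
A Laplace-type analysis of the sum defining $Z_{\alpha,y}^{(n)}$, or equivalently the sharper estimates already underlying~\eqref{Eq:mean-variance-fractional} in \cite{HN}, shows that for fixed $y\in(0,1)$ the normalizing constant stays bounded in $n$ (the maximum term is of order $1$, attained near $j = yn$). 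Hence $n^{-1}\log(Z_{\alpha,y_\lambda}^{(n)}/Z_{\alpha,x}^{(n)}) \to 0$, and I obtain
\[
\Lambda_{\alpha,x}(\lambda) = \alpha \log\bigl(1-x + xe^{\lambda/\alpha}\bigr),
\]
which is $C^\infty$ and strictly convex on $\R$; the G\"artner--Ellis hypotheses are therefore satisfied.

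Once $\Lambda_{\alpha,x}$ is identified, G\"artner--Ellis yields the LDP with rate function $\Lambda_{\alpha,x}^{*}$. A direct optimization shows that, for $z\in(0,1)$, the supremum in $\Lambda_{\alpha,x}^{*}(z) = \sup_{\lambda\in\R}[\lambda z - \Lambda_{\alpha,x}(\lambda)]$ is attained at $\lambda^{*} = \alpha\log\bigl(z(1-x)/(x(1-z))\bigr)$, yielding precisely the formula~\eqref{Eq:rate-function-LDP} for $I_{\alpha,x}^{(\mathrm{L})}(z)$. For $z\notin[0,1]$, either sending $\lambda\to\pm\infty$ in $\lambda z - \Lambda_{\alpha,x}(\lambda)$ gives $+\infty$, or one observes that the support of $S_{\alpha,x}^{(n)}/n$ lies in $[0,1]$, so the LDP trivially forces the rate function to be $+\infty$ there.

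The main obstacle is the MGF computation: because $\mu_{\alpha,x}^{(n)}$ is \emph{not} in general the law of a sum of i.i.d.\ random variables, the MGF does not reduce to an $n$-th power, and one must work directly with the fractional binomial sum. The key algebraic observation is that the MGF, after the substitution $u = e^{\lambda/\alpha}$, is itself a fractional binomial-type sum with new parameter $y_\lambda$, which allows recycling the asymptotic estimates on $Z_{\alpha,\cdot}^{(n)}$ from~\cite{HN} and reduces the delicate part to verifying sub-exponential growth of these normalizing constants.
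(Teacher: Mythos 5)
Your proposal is correct, and it takes a genuinely different and arguably cleaner route than the paper. The paper applies G\"artner--Ellis too, but it establishes the limit $\Lambda_{\alpha,x}(\xi)=\alpha\log(1-x+xe^{\xi/\alpha})$ by plugging the explicit MGF formula from Theorem~\ref{Thm:MGF-fractional-binomial} (derived from the generalized binomial theorem), decomposing the MGF into a leading term $A_n^{(1)}$ plus exponentially smaller corrections $A_n^{(2)}(\omega),A_n^{(3)},A_n^{(4)}$, and invoking a dedicated lemma (Lemma~\ref{Lem:log-inequality}) to conclude that the leading term dictates the logarithmic asymptotics. Your key observation is the algebraic factorization
\[
\mathbb{E}\bigl[e^{\lambda S_{\alpha,x}^{(n)}}\bigr]
=\frac{Z_{\alpha,y_\lambda}^{(n)}}{Z_{\alpha,x}^{(n)}}\,\bigl(1-x+xe^{\lambda/\alpha}\bigr)^{\alpha n},
\qquad y_\lambda=\frac{xe^{\lambda/\alpha}}{1-x+xe^{\lambda/\alpha}}\in(0,1),
\]
which follows directly from the defining sum by absorbing $e^{\lambda j}=(e^{\lambda/\alpha})^{\alpha j}$ into the weights and renormalizing -- in effect an exponential tilting at the level of the parameter $x$. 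This collapses the entire estimate to the single fact that $Z_{\alpha,y}^{(n)}\to1$ for $y\in(0,1)$, which is already recorded as equation~\eqref{Eq:Z} in the paper (so you need not redo a Laplace analysis; you can just cite it, noting that for fixed $\lambda$ the point $y_\lambda$ is fixed and the estimate holds). Your route avoids both the explicit integral representation of the MGF and Lemma~\ref{Lem:log-inequality}; the paper's heavier machinery is, however, reused verbatim in the MDP proof (where the tilting argument would require uniformity of the $O(e^{-\delta n})$ bound as $y_\lambda\to x$), so within the paper the explicit-MGF route pays for itself. The Legendre--Fenchel computation and the verification of the G\"artner--Ellis hypotheses in your write-up match the paper's.
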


Next, let us focus on a moderate speed rate between $n$ of LLN-type
and $\sqrt{n}$ of CLT-type. 
An LDP corresponding to such a speed rate is occasionally called 
a \textit{moderate deviation principle} (MDP in short). 
It is known that MDPs can be obtained from neither LDPs nor CLTs.
Suppose that a positive sequence $\{c_n\}_{n=1}^\infty$ satisfies that
    \begin{equation}\label{Eq:moderate-rate}
         \lim_{n \to \infty}\frac{c_n}{n}=0 \quad \text{and} \quad
         \lim_{n \to \infty}\frac{c_n}{\sqrt{n}}=+\infty.
     \end{equation}
We show that the sequence of laws of 
the properly scaled random variables
    \[
        Y_{\alpha, x}^{(n)}:=\frac{S_{\alpha, x}^{(n)}-nx}{c_n}, \qquad 
        n \in \N,
    \]
satisfies an MDP as follows.

\begin{tm}[MDP for $\mu_{\alpha, x}^{(n)}$]
\label{Thm:MDP-fractional}
    Let $\alpha>0$, $n \in \N$, and $x \in (0, 1)$.
    Suppose that a positive sequence $\{c_n\}_{n=1}^\infty$
    satisfies \eqref{Eq:moderate-rate}.
    Then, the sequence of laws of 
    $Y_{\alpha, x}^{(n)}$, $n \in \N$, satisfies the MDP 
    with speed $c_n^2/n$ and  quadratic rate function 
        \[
            I_{\alpha, x}^{(\mathrm{M})}(z) 
            =\frac{\alpha}{2x(1-x)}z^2, \qquad z \in \R.
        \]
    Namely, the following two properties hold.
    \begin{enumerate}
    \item[{\rm (1)}] For any closed set $F \subset \R$, we have 
        \[
        \limsup_{n \to \infty} \frac{n}{c_n^2}\log 
        \mathbb{P}\left(Y_{\alpha, x}^{(n)} \in F\right)\le -\inf_{z \in F}I_{\alpha, x}^{({\rm M})}(z).
        \]
    \item[{\rm (2)}]
    For any open set $G \subset \R$, we have 
        \[
        \liminf_{n \to \infty} \frac{n}{c_n^2}\log \mathbb{P}\left(Y_{\alpha, x}^{(n)} \in G\right) \ge -\inf_{z \in G}I_{\alpha, x}^{({\rm M})}(z).
        \]
    \end{enumerate}
\end{tm}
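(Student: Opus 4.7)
The plan is to apply the G\"artner--Ellis theorem to the scaled logarithmic moment generating function of $Y_{\alpha,x}^{(n)}$ at speed $c_n^2/n$. For any $\lambda \in \R$, set $t_n := \lambda c_n /n$ and write
\[
\Lambda_n(\lambda) := \frac{n}{c_n^2}\log \mathbb{E}\bigl[\exp\bigl((c_n^2/n)\lambda Y_{\alpha,x}^{(n)}\bigr)\bigr]
= \frac{n}{c_n^2}\Bigl(\log M_n(t_n) - x\lambda c_n\Bigr),
\]
where $M_n(t) := \mathbb{E}\bigl[\exp(tS_{\alpha,x}^{(n)})\bigr]$. By \eqref{Eq:moderate-rate} we have $t_n \to 0$ for every fixed $\lambda$, so the key analytic task is a quantitative Taylor expansion of $\log M_n(t)$ at the origin.

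Using the explicit formula for $M_n(t)$ to be established in Section~\ref{Sect:fractional-binomial-distribution} together with the mean--variance asymptotics \eqref{Eq:mean-variance-fractional}, I would derive an expansion of the form
\[
\log M_n(t) = nxt + \frac{nx(1-x)}{2\alpha}t^2 + r_n(t), \qquad |t|\le t_0,
\]
with a remainder satisfying $|r_n(t)|\le Cn|t|^3$ for some constant $C=C(\alpha,x)$ independent of $n$. Substituting $t = t_n$, the linear term cancels $x\lambda c_n$, the quadratic term contributes $\frac{x(1-x)}{2\alpha}\lambda^2\cdot (c_n^2/n)$, and the cubic remainder is bounded by $C|\lambda|^3 c_n^3/n^2$. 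Multiplying by $n/c_n^2$ and using \eqref{Eq:moderate-rate} (which gives $c_n/n \to 0$), we obtain, for each $\lambda \in \R$,
\[
\Lambda_n(\lambda)\ \longrightarrow\ \Lambda(\lambda):=\frac{x(1-x)}{2\alpha}\lambda^2 \qquad (n\to\infty).
\]

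Since $\Lambda$ is finite and differentiable on all of $\R$ (in particular essentially smooth and lower semicontinuous), the G\"artner--Ellis theorem reviewed in Section~\ref{Sect:LDP-MDP} yields the MDP for $\{Y_{\alpha,x}^{(n)}\}$ at speed $c_n^2/n$ with good rate function $\Lambda^*(z) = \sup_{\lambda \in \R}(\lambda z - \Lambda(\lambda))$. The supremum is attained at $\lambda^* = \alpha z/(x(1-x))$ and gives $\Lambda^*(z) = \frac{\alpha}{2x(1-x)}z^2 = I_{\alpha,x}^{(\mathrm{M})}(z)$, as required.

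The main obstacle lies in the quantitative Taylor bound: although \eqref{Eq:mean-variance-fractional} already suggests the correct leading behaviour of $\log M_n(t)$, we need a cubic remainder that is $O(n|t|^3)$ uniformly in $n$ on a fixed neighbourhood of $t = 0$, which amounts to controlling $\partial_t^3 \log M_n(t)$ to be $O(n)$ near the origin. Since $\mu_{\alpha,x}^{(n)}$ is not a convolution of i.i.d.\ laws, this cannot be extracted from abstract moment bounds and will instead rely on the concrete structure of $M_n(t)$ produced in Section~\ref{Sect:fractional-binomial-distribution}.
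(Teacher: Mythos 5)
Your overall strategy is the same as the paper's: apply the G\"artner--Ellis theorem to the scaled logarithmic moment generating function, using the explicit formula for $M_{\alpha,x}^{(n)}$ from Theorem~\ref{Thm:MGF-fractional-binomial}, and Taylor-expand the dominant term $(1-x+xe^{t/\alpha})^{\alpha n}$. Where you diverge is in packaging: you propose a single uniform Taylor bound
\[
\log M_n(t) = nxt + \tfrac{nx(1-x)}{2\alpha}t^2 + r_n(t),\qquad |r_n(t)|\le Cn|t|^3 \text{ on } |t|\le t_0,
\]
whereas the paper instead decomposes the MGF into the main term plus correction terms (the $\widetilde{A}_n^{(2)}(\omega)$, $\widetilde{A}_n^{(3)}$, $\widetilde{A}_n^{(4)}$ in its notation), Taylor-expands only the main term, and uses Lemma~\ref{Lem:log-inequality} to discard the corrections because their scaled logarithms tend to $-\infty$.

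The bound you state is not quite correct, and this is a substantive issue even though the end result can be rescued. The first two cumulants of $S_{\alpha,x}^{(n)}$ are $nx+O(e^{-\delta n})$ and $nx(1-x)/\alpha+O(e^{-\delta n})$, not $nx$ and $nx(1-x)/\alpha$ exactly; the $O(e^{-\delta n})$ errors get absorbed into $r_n(t)$, contributing terms of size $O(e^{-\delta n}|t|)$ and $O(e^{-\delta n}t^2)$, which are \emph{not} $O(n|t|^3)$ uniformly in $|t|\le t_0$ (take $|t|$ much smaller than $e^{-\delta n}$). The correct remainder estimate is more like $|r_n(t)|\le C\bigl(e^{-\delta n}|t| + n|t|^3\bigr)$. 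This still yields the required limit, since after substituting $t=\lambda c_n/n$ and multiplying by $n/c_n^2$ both pieces vanish: $\frac{n}{c_n^2}\cdot e^{-\delta n}\cdot\frac{c_n}{n}\to 0$ and $\frac{n}{c_n^2}\cdot n\cdot\frac{c_n^3}{n^3}=\frac{c_n}{n}\to 0$ by \eqref{Eq:moderate-rate}. But the paper's route via Lemma~\ref{Lem:log-inequality} sidesteps the need to formulate such a bound carefully, and is where the work of controlling the correction terms is actually done; your proposal explicitly identifies the Taylor bound as ``the main obstacle'' but leaves it unproved, which is where the real content of the argument lies.
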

Note that the rate functions in Theorems~\ref{Thm:LDP-fractional} and \ref{Thm:MDP-fractional} are equal to those of the case of classical binomial distributions multiplied by $\alpha$.

\smallskip

\noindent
{(II)} 
\textit{Berry--Esseen type estimate for $\mu_{\alpha, x}^{(n)}$.} 
Since we proved the central limit theorem for $\mu_{\alpha, x}^{(n)}$ in \cite[Theorem~3.2]{HN}, 
it is natural to ask for a quantitative error estimate of the CLT
as a subsequent question. 
Recall \eqref{Eq:normalized-mu}.
Let $\widetilde{F}_{\alpha, x}^{(n)}(z)=\mathbb{P}(\widetilde{S}_{\alpha, x}^{(n)} \le z)$ $(z \in \R)$ be the distribution function of $\widetilde{S}_{\alpha, x}^{(n)}$. 
Moreover, let $\Phi$ denote the distribution function of the standard normal distribution $N(0,1)$, that is, 
    \[
        \Phi(z)=\int_{-\infty}^z \frac{1}{\sqrt{2\pi}}
        e^{-u^2/2} \, \dd u, \qquad z \in \R.
    \]
A quantitative estimate of the difference between $\widetilde{F}_{\alpha, x}^{(n)}(z)$ and $\Phi(z)$ is known as a \emph{Berry--Esseen type estimate},
which is stated as follows.

\begin{tm}[Berry--Esseen type estimate for $\mu_{\alpha, x}^{(n)}$]
\label{Thm:Berry-Esseen}
    There exists some positive constant $C$
    depending on $\alpha>0$ and $x \in (0, 1)$ such that 
    \begin{equation}\label{Eq:BE-estimate}
        \sup_{z \in \R}|\widetilde{F}_{\alpha, x}^{(n)}(z) - \Phi(z)|  \le \frac{C}{\sqrt{n}}, \qquad  n \in \N.   
    \end{equation}
\end{tm}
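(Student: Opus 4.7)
The plan is to apply Esseen's smoothing inequality: for any $T>0$,
$$\sup_{z\in\R}\bigl|\widetilde{F}_{\alpha,x}^{(n)}(z) - \Phi(z)\bigr| \le \frac{1}{\pi}\int_{-T}^{T}\left|\frac{\widetilde{\varphi}_{\alpha,x}^{(n)}(t) - e^{-t^2/2}}{t}\right|\dd t + \frac{C_1}{T},$$
where $\widetilde{\varphi}_{\alpha,x}^{(n)}$ denotes the characteristic function of $\widetilde{S}_{\alpha,x}^{(n)}$ and $C_1$ is an absolute constant. Choosing $T=\delta_0\sqrt{n}$ for a suitably small $\delta_0>0$ makes the boundary term $O(n^{-1/2})$, so the remaining problem is to bound the integral by $O(n^{-1/2})$. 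As preparation, I would extract an explicit expression for $\widetilde{\varphi}_{\alpha,x}^{(n)}(t)$ from the moment generating function of $\mu_{\alpha,x}^{(n)}$ established in Section~\ref{Sect:fractional-binomial-distribution} (evaluated at purely imaginary arguments) via
$$\widetilde{\varphi}_{\alpha,x}^{(n)}(t) = e^{-\ii t\,\E[S_{\alpha,x}^{(n)}]/\sigma_n}\,\varphi_{S_{\alpha,x}^{(n)}}(t/\sigma_n), \qquad \sigma_n := \sqrt{\Var(S_{\alpha,x}^{(n)})}.$$
By \eqref{Eq:mean-variance-fractional}, $\sigma_n^2\sim \alpha^{-1}nx(1-x)$, so the rescaled argument $t/\sigma_n$ stays bounded throughout $[-T,T]$.

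The integration range is then split into a CLT zone $|t|\le n^{1/6}$ and an intermediate zone $n^{1/6}\le|t|\le \delta_0\sqrt{n}$. In the CLT zone, a Taylor expansion of $\log\widetilde{\varphi}_{\alpha,x}^{(n)}(t)$ about $t=0$, together with uniform control of the lower moments of $\widetilde{S}_{\alpha,x}^{(n)}$ (readable from the mgf in the spirit of \eqref{Eq:mean-variance-fractional}), should yield an estimate of the form
$$\bigl|\widetilde{\varphi}_{\alpha,x}^{(n)}(t) - e^{-t^2/2}\bigr| \le C_2\,\frac{|t|^3}{\sqrt{n}}\,e^{-t^2/4},$$
whose contribution to the integral is $O(n^{-1/2})$. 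In the intermediate zone, the Gaussian contribution $e^{-t^2/2}/|t|$ is super-polynomially small and integrates to $o(n^{-1/2})$, so it is enough to prove a uniform bound of the form $|\widetilde{\varphi}_{\alpha,x}^{(n)}(t)|\le e^{-c_0 n^{1/3}}$ on this zone.

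The last step is the main obstacle. For the classical case $\alpha=1$, the factorization $\varphi_{S_{1,x}^{(n)}}(s)=(1-x+xe^{\ii s})^n$ reduces the required bound to an elementary cosine inequality, but for general $\alpha$ no such product structure is available: the formula of Section~\ref{Sect:fractional-binomial-distribution} expresses $\varphi_{S_{\alpha,x}^{(n)}}(s)$ as a normalized superposition of exponential-type terms that does not factor through $n$. The crux is therefore to convert that explicit formula into a sharp quantitative upper bound on $|\varphi_{S_{\alpha,x}^{(n)}}(s)|$ at small-to-moderate scales in $s$, analogous to the classical cosine estimate, by bounding the real parts of the exponents that appear in the formula. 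Given the detailed asymptotic control already developed in Section~\ref{Sect:fractional-binomial-distribution}, this approach should be feasible, although the combinatorial bookkeeping will require care.
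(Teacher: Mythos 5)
Your top-level strategy is the same as the paper's: apply Esseen's smoothing inequality (the paper uses the version from Chung, Lemma~\ref{Lem:Chung}) with truncation at $T\asymp\sqrt{n}$, and reduce everything to a pointwise bound on $|\widetilde{\varphi}_{\alpha,x}^{(n)}(\xi)-e^{-\xi^2/2}|$ for $0\le\xi\le T$. But the proof is not complete: you explicitly defer the decisive estimate (the uniform bound on $|\widetilde{\varphi}_{\alpha,x}^{(n)}|$ away from the CLT zone) to ``the crux'' and call it an obstacle requiring further work. That obstacle \emph{is} the theorem, and the paper devotes all of Lemma~\ref{Lem:CF-estimate} to it, so as written the proposal is a plan rather than a proof.

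More substantively, your premise that ``no such product structure is available'' for general $\alpha$ misreads the explicit formula of Section~\ref{Sect:fractional-binomial-distribution} and causes you to overestimate the difficulty. The characteristic function expansion \eqref{Eq:fractional-CF-explicit} is not a shapeless superposition: it splits as
\[
\varphi_{\alpha,x}^{(n)}(\xi)=\frac{1}{Z_{\alpha,x}^{(n)}}\Bigl[\bigl(1-x+xe^{i\xi/\alpha}\bigr)^{\alpha n}+J_{\alpha,x}^{(n)}(\xi)\Bigr],
\]
and the whole point of Step~3 of the paper's Lemma~\ref{Lem:CF-estimate} is that $J_{\alpha,x}^{(n)}$ is \emph{uniformly} $O(e^{-\delta n})$ on the relevant range $|\xi|\le\gamma\sqrt{v_n}$: the terms with $\omega\in K_\alpha\setminus\{1\}$ satisfy $|1-x+x\omega e^{i\xi/\alpha}|<1$ with a margin (inequality \eqref{Eq:estimate<1}), and the integral terms carry factors $(1-x)^{\alpha n}$ or $x^{\alpha n}$. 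So the product structure you want \emph{is} present in the dominant term $(1-x+xe^{i\xi/\alpha})^{\alpha n}$, and the rest is exponentially negligible. Once this is observed, a single estimate
\[
\bigl|\widetilde{\varphi}_{\alpha,x}^{(n)}(\xi)-e^{-\xi^2/2}\bigr|\le C_1\bigl(\xi e^{-\delta_1 n}+\xi^3 n^{-1/2}\bigr)\exp\Bigl(C_2\xi-\tfrac{1}{6}\xi^2\Bigr)+C_3\xi e^{-\delta n}
\]
holds on the whole interval $0\le\xi\le\gamma\sqrt{n}$; the Gaussian-type decay $e^{-\xi^2/6}$ coming from the main product term makes your two-zone split ($|t|\le n^{1/6}$ versus $n^{1/6}\le|t|\le\delta_0\sqrt{n}$) unnecessary. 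There is also a minor technical point you gloss over: the mean and variance of $\mu_{\alpha,x}^{(n)}$ only match $nx$ and $\alpha^{-1}nx(1-x)$ up to $O(e^{-\delta n})$, so the centering and scaling introduce extra error terms (this is what produces the $\xi e^{-\delta_1 n}$ contribution in the paper's bound), and the paper tracks these carefully through the Taylor expansion of $\mathrm{Log}\,I_{\alpha,x}^{(n)}(\xi)$ rather than assuming exact moment identities.
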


\section{LDPs and the G\"artner--Ellis theorem}
\label{Sect:LDP-MDP}

In this section, we review the general definition of 
LDPs and the G\"artner--Ellis theorem for the proof of Theorems~\ref{Thm:LDP-fractional} and \ref{Thm:MDP-fractional}.
We refer to e.g., \cite{DZ} for details on 
the general theory of LDPs.

A lower semi-continuous map $I \colon \R \to [0, \infty]$ 
is called a \emph{rate function}. 
If a rate function $I$ has the property that each level set
$\{x \in \R \mid I(x) \le a\}$ of $I$ is a compact subset of $\R$
for all $a \ge 0$, then $I$ is said to be \emph{good}. 
The set $\mathcal{D}_I=\{x \in \R \mid I(x)<\infty\}$ is 
called the \emph{effective domain} of $I$. 
Let $\mathcal{B}(\R)$ denote the Borel $\sigma$-field on $\R$.

\begin{df}[Large deviation principle, cf.~{\cite[p.\,5]{DZ}}]\label{Def:LDP-general-definition}
    Let $\{a_n\}_{n=1}^\infty$ be a positive sequence satisfying 
    $a_n \to \infty$ as $n \to \infty$. 
    A sequence $\{\mu_n\}_{n=1}^\infty$ of probability measures on $(\R, \mathcal{B}(\R))$ satisfies the large deviation principle 
    with speed $a_n$ and
    rate function $I$ if 
    the following two properties hold. 
    \begin{enumerate}
    \item[{\rm (1)}]
    For any closed set $F \subset \R$, we have 
        \[
        \limsup_{n \to \infty} \frac{1}{a_n}\log \mu_n(F) \le -\inf_{z \in F}I(z).
        \]
    \item[{\rm (2)}]
    For any open set $G \subset \R$, we have 
        \[
        \liminf_{n \to \infty} \frac{1}{a_n}\log \mu_n(F) \ge -\inf_{z \in G}I(z).
        \]
\end{enumerate} 
\end{df}

\noindent
The most typical choice of the positive sequence 
$\{a_n\}_{n=1}^\infty$ is $a_n=n$, $n \in \N$.

Let $\{Y_n\}_{n=1}^\infty$ be a sequence of real random variables. 
For $n \in \N$, let  
    \[
    \Lambda_n(\xi):=\log \mathbb{E}[e^{\xi Y_n}], 
    \qquad \xi \in \R,
    \]
be the logarithmic moment generating function of $Y_n$.
We now put the following two assumptions on $\Lambda_n$. 

\begin{enumerate}
    \item[{(A1)}] 
    There exists the limit 
        \[
        \Lambda(\xi):=\lim_{n \to \infty}\frac{1}{a_n}\Lambda_n(a_n\xi) \, (\in [-\infty, \infty]), \qquad \xi \in \R. 
        \]
    \item[{(A2)}] 
    It holds that $0 \in (\mathcal{D}_\Lambda)^\circ$, where 
    $\mathcal{D}_\Lambda$ is the effective domain of $\Lambda$. 
\end{enumerate}

Let $\Lambda^*$ be the \emph{Fenchel--Legendre transform} of $\Lambda_n$, that is, 
    \[
    \Lambda^*(z):=\sup_{\xi \in \R}\left(\xi z - \Lambda(\xi)\right), \qquad z \in \R. 
    \]
We note that $\Lambda$ is convex and $\Lambda^*$ is a good rate function 
provided that both {\bf (A1)} and {\bf (A2)} hold. 
The following plays a crucial role in the present paper. 

\begin{tm}[The G\"artner--Ellis theorem, cf.~{\cite[Theorem~2.3.6]{DZ}}]
\label{Thm:Gartner-Ellis}
    Assume both {\rm (A1)} and {\rm (A2)}. 
    Then, the following hold. 

    \begin{enumerate}
        \item[{\rm (1)}] For any closed set $F \subset \R$, we have 
            \[
            \limsup_{n \to \infty} \frac{1}{a_n} \log \mathbb{P}(Y_n \in F) \le -\inf_{z \in F}\Lambda^*(z). 
            \]
        \item[{\rm (2)}] Assume further the following: 
            \begin{itemize}
                \item[(a)]
                    $\Lambda$ is lower semi-continuous on $\R$,
                \item[(b)] 
                    $\Lambda$ is differentiable on $(\mathcal{D}_\Lambda)^\circ$, 
                \item[(c)]
                    Either $\mathcal{D}_\Lambda=\R$ or 
                    $\Lambda$ is steep at $\partial \mathcal{D}_\Lambda$, that is, 
                    $\dis \lim_{\xi \to \del \mathcal{D}_\Lambda, \, \xi \in \mathcal{D}_\Lambda}|\Lambda'(\xi)|<\infty $. 
            \end{itemize}
        Then, for any open set $G \subset \R$, we have 
            \[
            \liminf_{n \to \infty} \frac{1}{a_n} \log \mathbb{P}(Y_n \in G) \ge -\inf_{z \in G}\Lambda^*(z),
            \]
        which means that the sequence $\{\mu_n=\mathbb{P} \circ Y_n^{-1}\}_{n=1}^\infty$ satisfies 
        the LDP with speed $a_n$ and 
         good rate function $\Lambda^*(z)$. 
    \end{enumerate}
\end{tm}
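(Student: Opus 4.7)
The plan is to follow the standard proof of the G\"artner--Ellis theorem, which combines exponential Markov inequalities for the upper bound with a tilted-measure (change of probability) argument for the lower bound. For the upper bound in part~(1), I would first apply Markov's inequality to $e^{a_n \lambda Y_n}$ with $\lambda\ge 0$, and symmetrically with $\lambda\le 0$, to obtain, for every $t\in \R$,
\[
\mathbb{P}(Y_n \ge t) \le \exp(-a_n \lambda t + \Lambda_n(a_n \lambda)), \qquad \mathbb{P}(Y_n \le t) \le \exp(-a_n \lambda t + \Lambda_n(a_n \lambda)).
\]
Taking $\frac{1}{a_n}\log$ of both sides and invoking assumption~(A1) yields the pointwise half-line bound $\limsup_{n}\frac{1}{a_n}\log \mathbb{P}(Y_n \ge t)\le -\Lambda^*(t)$ for $t$ above the mean of $\Lambda$, and its reflection on the left. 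Assumption~(A2) gives exponential tightness of $\{Y_n\}$: for each $M>0$ there is a compact $K_M \subset \R$ with $\limsup_n \frac{1}{a_n}\log \mathbb{P}(Y_n \notin K_M) \le -M$. For an arbitrary closed $F$, I would cover $F\cap K_M$ by finitely many short intervals, apply the half-line bound to each, and let $M\to\infty$.

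For part~(2), the core step is an exponential tilt. Fix an open $G\subset\R$ and a point $z\in G$. Conditions (a)--(c) together with convex analysis provide some $\eta \in (\mathcal{D}_\Lambda)^\circ$ satisfying $\Lambda'(\eta)=z$ and $\Lambda^*(z)=\eta z -\Lambda(\eta)$. Introduce the tilted law $\widetilde{\mathbb{P}}_n$ with density $\exp(a_n \eta Y_n - \Lambda_n(a_n \eta))$ with respect to $\mathbb{P}$. Choosing $\delta>0$ small enough that $(z-\delta,z+\delta)\subset G$, a direct computation gives
\[
\frac{1}{a_n}\log \mathbb{P}(Y_n\in G) \ge -\eta z - |\eta|\delta + \frac{1}{a_n}\Lambda_n(a_n \eta) + \frac{1}{a_n}\log \widetilde{\mathbb{P}}_n(|Y_n-z|<\delta).
\]
The first three terms converge to $-\Lambda^*(z) + O(\delta)$ by assumption~(A1). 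It remains to show $\widetilde{\mathbb{P}}_n(|Y_n-z|<\delta)\to 1$, for which I would compute the tilted logarithmic moment generating function $\widetilde{\Lambda}(\xi):=\Lambda(\xi+\eta)-\Lambda(\eta)$, observe that $\widetilde{\Lambda}'(0)=\Lambda'(\eta)=z$, and apply the already established upper bound of part~(1) to $\widetilde{\mathbb{P}}_n(|Y_n-z|\ge \delta)$. Letting $\delta\downarrow 0$ and then taking an infimum over $z\in G$ completes the lower bound.

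The main obstacle is convex-analytic rather than probabilistic: one has to show that every $z\in G$ with $\Lambda^*(z)$ close to $\inf_G \Lambda^*$ can be approximated by \emph{exposed} points of $\Lambda^*$ at which the required tilt $\eta$ exists inside $(\mathcal{D}_\Lambda)^\circ$. This is exactly what the steepness condition~(c) delivers, by preventing the optimizing $\eta$ from escaping to $\partial \mathcal{D}_\Lambda$. Once this approximation step and the exponential-tightness input are in place, the remainder is routine bookkeeping. Since the statement is quoted verbatim from \cite[Theorem~2.3.6]{DZ}, in the present paper I would simply appeal to that reference rather than reproduce the argument, and apply it to the specific $\Lambda$ arising from $\mu_{\alpha,x}^{(n)}$ in the subsequent sections.
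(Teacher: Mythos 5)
The paper does not prove this theorem: it is quoted as a known result with the citation to Dembo--Zeitouni (``cf.~\cite[Theorem~2.3.6]{DZ}''), exactly as you note in your final sentence. Your proof sketch is the standard Markov-inequality-plus-exponential-tilting argument and is broadly sound (the chief gap being that one must reduce to exposed points of $\Lambda^*$ with exposing hyperplanes in $(\mathcal{D}_\Lambda)^\circ$ before the tilt $\eta$ is guaranteed to exist, a reduction you flag but do not carry out), but reproducing it is unnecessary here and the paper does not do so.
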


\section{Moment generating function of the fractional binomial distribution}
\label{Sect:fractional-binomial-distribution}
In this section, we give an alternative representation of the normalizing constant $Z_{\alpha, x}^{(n)}$ together with the explicit expression of the moment generating function of $\mu_{\alpha, x}^{(n)}$.

For $z \in \mathbb{C} \setminus \{x \in \R \mid x \le 0\}$ 
and $\gamma \in \R$,
the power $z^\gamma$ is defined as $\exp(\gamma \,\mathrm{Log}\,z)$, 
where $\mathrm{Log}\,z$ is the principal value of $\log z$ 
so that $\mathrm{Log}\,1=0$. 
For $\alpha>0$, we put 
\[
    K_\alpha:=\{e^{i\theta} \in \mathbb{C} 
    \mid -\pi < \theta \le \pi, \, e^{i \theta \alpha}=1\}. 
\]
We note that $K_\alpha=\{1\}$ when $\alpha \in (0, 2)$ and 
that $-1 \in K_\alpha$ if and only if $\alpha/2 \in \N$. 

A key identity is stated as follows.

\begin{tm}[Generalized binomial theorem, see {\cite[Theorem~3.2]{HH}}
and {\cite[Lemma~2.13]{HN}}]
\label{Thm:generalization-binomial-theorem-HH}
    For $\alpha>0$, $n \in \N$, and $\lambda >0$, 
    we have 
    \begin{align}\label{Eq:Generalized-binomial-theorem-HH}
        \alpha\sum_{j=0}^n \binom{\alpha n}{\alpha j} \lambda^{\alpha j}
        &= \sum_{\omega \in K_\alpha}(1+\lambda \omega)^{\alpha n}
            -\frac{\alpha \lambda^\alpha \sin \alpha \pi}{\pi}
            \int_0^1 t^{\alpha-1}(1-t)^{\alpha n} \nonumber \\*
        &\qquad\times \left\{
        \frac{1}{|t^\alpha-\lambda^\alpha e^{-i\alpha \pi}|^2}
        +\frac{\lambda^{\alpha n}}{|e^{-i\alpha \pi}-(\lambda t)^\alpha|^2}
        \right\} \, \dd t. 
    \end{align}
    Here, the second term on the right-hand side is regarded as zero if $\alpha/2 \in \N$. 
\end{tm}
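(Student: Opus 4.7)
The proof should be complex-analytic, generalizing the classical roots-of-unity filter to non-integer $\alpha$. For $\alpha\in\N$, $K_\alpha$ is exactly the set of $\alpha$-th roots of unity and $\sin\alpha\pi=0$, so the claimed identity reduces to the classical binomial theorem combined with the filter $\frac{1}{\alpha}\sum_{\omega\in K_\alpha}\omega^k=\mathbf{1}_{\alpha\mid k}$, giving $\sum_{\omega}(1+\lambda\omega)^{\alpha n}=\alpha\sum_{j=0}^n\binom{\alpha n}{\alpha j}\lambda^{\alpha j}$. The task for general $\alpha>0$ is to realize the same decomposition via a contour integral in which the analogue of the filter automatically picks up an extra term from the branch cut of the multi-valued function $w\mapsto w^\alpha$.

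\textbf{Contour representation.} My plan is to represent the left-hand side as a contour integral of the shape
\[
    \alpha\sum_{j=0}^n\binom{\alpha n}{\alpha j}\lambda^{\alpha j}
    =\frac{1}{2\pi\ii}\oint_C (1+\lambda w)^{\alpha n}\,\frac{\alpha\,w^{\alpha-1}}{w^\alpha-1}\,\dd w,
\]
using the principal branch of $w^\alpha$ with cut along $(-\infty,0]$, where $C$ is a keyhole-type contour encircling every $\omega\in K_\alpha$ but avoiding the cut. The integrand has simple poles of residue $(1+\lambda\omega)^{\alpha n}$ at each $\omega\in K_\alpha$. I would verify this representation by deforming $C$ outward and matching the Laurent expansion of $\alpha w^{\alpha-1}/(w^\alpha-1)$ in powers of $w^{-\alpha}$ against the series on the left. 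Once the representation is established, contracting $C$ onto a contour tightly wrapping around $(-\infty,0]$ and collecting residues at $K_\alpha$ yields the decomposition appearing in \eqref{Eq:Generalized-binomial-theorem-HH}.

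\textbf{Evaluation of the branch cut integral.} Parametrize the cut by $w=-t$ with $t>0$. The principal branch gives $w^\alpha=t^\alpha e^{\pm\ii\alpha\pi}$ on the two sides, and subtracting opposite contributions yields the prefactor $(\alpha\sin\alpha\pi)/\pi$. The numerator $(1+\lambda w)^{\alpha n}=(1-\lambda t)^{\alpha n}$ has its own branch point at $t=1/\lambda$, so the cut integral must be split into pieces on $(0,1/\lambda)$ and $(1/\lambda,\infty)$. On the outer piece, the substitution $t\mapsto 1/(\lambda^2 s)$ (or equivalent inversion) returns the domain to $(0,1)$ and releases a factor $\lambda^{\alpha n}$, which is the origin of the second summand in the integrand of \eqref{Eq:Generalized-binomial-theorem-HH}. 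Combining the symmetric contributions from opposite sides of the cut turns the rational factors into $|t^\alpha-\lambda^\alpha e^{-\ii\alpha\pi}|^{-2}$ and $|e^{-\ii\alpha\pi}-(\lambda t)^\alpha|^{-2}$, matching the integrand exactly.

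\textbf{Main obstacle.} The principal difficulty lies in simultaneously tracking the branches of $w^\alpha$ and $(1+\lambda w)^{\alpha n}$ across both cuts, in particular at the coincidence point $t=1/\lambda$, together with justifying that the arc contributions at $w=0$ and $|w|\to\infty$ vanish under the deformation. The exceptional case $\alpha/2\in\N$ is consistent with the formula: $-1\in K_\alpha$ then, so $\omega=-1$ is a legitimate residue rather than a spurious singularity sitting on the cut, and simultaneously $\sin\alpha\pi=0$ annihilates the integral correction, exactly as the theorem prescribes.
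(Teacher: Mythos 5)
The paper does not prove Theorem~\ref{Thm:generalization-binomial-theorem-HH}; it is stated as a citation of {\cite[Theorem~3.2]{HH}} and {\cite[Lemma~2.13]{HN}}, so there is no in-paper argument to compare against. I can only assess your proposal on its own terms, and it has a structural gap that I do not think can be patched without changing the central object of the argument.

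The problem lies in the very first step, the claimed representation
\[
\alpha\sum_{j=0}^n\binom{\alpha n}{\alpha j}\lambda^{\alpha j}
=\frac{1}{2\pi\ii}\oint_C(1+\lambda w)^{\alpha n}\,\frac{\alpha w^{\alpha-1}}{w^\alpha-1}\,\dd w .
\]
The kernel $\alpha w^{\alpha-1}/(w^\alpha-1)$ is the logarithmic derivative of $w^\alpha-1$: it has a simple pole with residue $1$ at every $\omega\in K_\alpha$, and for $|w|>1$ it expands as $\alpha w^{-1}\sum_{k\ge0}w^{-\alpha k}$, an \emph{infinite} geometric series. This is incompatible with the left-hand side, whichever reading of $C$ one takes. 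If $C$ is a closed curve that encircles exactly the points of $K_\alpha$ and stays away from the cut $(-\infty,0]$, then the residue theorem immediately gives $\frac{1}{2\pi\ii}\oint_C=\sum_{\omega\in K_\alpha}(1+\lambda\omega)^{\alpha n}$, with no correction term at all---so the proposed identity would prove the (false, for $\alpha\notin\N$) statement that the correction integral vanishes. If instead one tries to validate the representation by expanding at $w=\infty$ and matching term by term, the product of $\alpha w^{-1}\sum_{k\ge0}w^{-\alpha k}$ with $(1+\lambda w)^{\alpha n}=\sum_{m\ge0}\binom{\alpha n}{m}\lambda^{\alpha n-m}w^{\alpha n-m}$ collects the coefficient of $w^{-1}$ from the pairs $m=\alpha(n-k)$, producing $\alpha\sum_{k\ge0}\binom{\alpha n}{\alpha k}\lambda^{\alpha k}$. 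For non-integer $\alpha$ the coefficients $\binom{\alpha n}{\alpha k}$ do \emph{not} vanish for $k>n$ (the gamma in the denominator is only infinite when $\alpha(k-n)$ happens to be a positive integer), so this is genuinely an infinite series different from the finite sum you set out to represent, and it need not even converge when $\lambda\ge1$.

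What is missing is a kernel that encodes the \emph{finite} geometric sum $\sum_{j=0}^n (\lambda/w)^{\alpha j}$ rather than the infinite one. Writing this sum as $\bigl(w^{\alpha(n+1)}-\lambda^{\alpha(n+1)}\bigr)/\bigl(w^{\alpha n}(w^\alpha-\lambda^\alpha)\bigr)$ shows that the second summand of the bracket in \eqref{Eq:Generalized-binomial-theorem-HH}, the one carrying the factor $\lambda^{\alpha n}$, is already present in the correct kernel before any contour gymnastics; it does not arise as you suggest from an inversion $t\mapsto1/(\lambda^2 s)$ of a single term. Moreover, the numerator $w^{\alpha(n+1)}$ introduces its own branch cut along $(-\infty,0]$, so the branch-cut analysis has to juggle the cuts of $w^{\alpha}$, $w^{\alpha(n+1)}$, and $(1+\lambda w)^{\alpha n}$ simultaneously, with the last starting at $w=-1/\lambda$ rather than at $0$. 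Your closing remarks are sound---the residue at each $\omega\in K_\alpha$ is indeed $(1+\lambda\omega)^{\alpha n}$, and the observation that $\omega=-1$ becomes a legitimate pole precisely when $\sin\alpha\pi=0$ correctly explains why the $\alpha/2\in\N$ case degenerates---but those are consistency checks on the target formula rather than steps in a proof, and the identity you start from is not a valid representation of the left-hand side.
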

In particular, by putting $\lambda=x/(1-x)$, $x \in (0, 1)$, in \eqref{Eq:Generalized-binomial-theorem-HH}
and multiplying both sides by $(1-x)^n$, we obtain 
\begin{align*}
    Z_{\alpha, x}^{(n)}
    &=\alpha\sum_{j=0}^n \binom{\alpha n}{\alpha j} x^{\alpha j}(1-x)^{\alpha(n-j)} \nn \\
    &= 1+\sum_{\omega \in K_\alpha \setminus \{1\}}(1-x+x\omega)^{\alpha n}
    -\frac{\alpha x^\alpha (1-x)^\alpha \sin \alpha \pi}{\pi}
    \int_0^1 t^{\alpha-1}(1-t)^{\alpha n} \nonumber \nn \\*
    &\qquad\times \left[
    \frac{(1-x)^{\alpha n}}{|\{t(1-x)\}^\alpha-x^\alpha e^{-i\alpha \pi}|^2}
    +\frac{x^{\alpha n}}{|(1-x)^\alpha e^{-i\alpha \pi}-(tx)^\alpha|^2}
    \right] \, \dd t 
\end{align*}
for $\alpha>0$, $n \in \N$, and $x \in (0, 1)$. 
We observe that 
$Z_{\alpha, x}^{(n)}$ is close to 1 when $n$ is sufficiently large.
More precisely, it holds that 
\begin{equation}\label{Eq:Z}
Z_{\alpha, x}^{(n)}=1+O(e^{-\delta n})\quad\text{as }n \to \infty,
\end{equation}
where $\delta>0$ is chosen uniformly on every compact subset of $x \in (0, 1)$
and on every compact subset of $\alpha \in (0, \infty)$. 
It is worth mentioning that the terminology ``generalized binomial theorem''
is often used for indicating the power series expansion 
     \[
	(1+x)^\alpha = \sum_{n=0}^{\infty} \binom{\alpha}{n} x^{n},
	\qquad |x|<1,\ \alpha \in \mathbb{R}.
      \]
Here, Theorem~\ref{Thm:generalization-binomial-theorem-HH}
is another kind of generalization of the classical binomial theorem. 

Let $\alpha>0$, $n \in \N$, and $x \in (0, 1)$. 
Let us recall that $S_{\alpha, x}^{(n)}$ is a random variable 
whose law is $\mu_{\alpha, x}^{(n)}$. 
The moment generating function $M_{\alpha, x}^{(n)}$ of $\mu_{\alpha, x}^{(n)}$ is defined as
    \begin{equation}\label{Eq:MGF-fractional}
        M_{\alpha, x}^{(n)}(\xi)
        := \mathbb{E}\left[e^{\xi S_{\alpha, x}^{(n)}}\right]=
        \sum_{j=0}^n e^{\xi j}\mu_{\alpha, x}^{(n)}(\{j\}),
        \qquad \xi \in \R.
    \end{equation}
In the manner similar to the proof of \cite[Theorem~2.15]{HN}, 
we have the following.

\begin{tm}[Explicit expression of $M_{\alpha, x}^{(n)}$]\label{Thm:MGF-fractional-binomial}
    Let $\alpha>0$, $n \in \N$, $x \in (0, 1)$,
    and $\xi \in \R$.
    Then, we have 
    \begin{align}
        M_{\alpha, x}^{(n)}(\xi)
        &=\frac{1}{Z_{\alpha, x}^{(n)}}\left(1-x+xe^{\xi/\alpha }\right)^{\alpha n} 
        +\sum_{\omega \in K_\alpha \setminus \{1\}}
        \frac{1}{Z_{\alpha, x}^{(n)}}
        \left(1-x+x\omega e^{\xi/\alpha }\right)^{\alpha n} \nn \\
        &\quad-\frac{1}{Z_{\alpha, x}^{(n)}}
        \frac{\alpha e^{\xi}\sin \alpha \pi}{\pi}\int_0^1 t^{\alpha-1}(1-t)^{\alpha n} \nonumber \\
        &\quad\times \Biggl[
        \frac{x^\alpha (1-x)^{\alpha(n+1)}}{|\{t(1-x)\}^\alpha-x^\alpha e^{\xi-i\alpha \pi}|^2} 
        +\frac{x^{\alpha (n+1)}(1-x)^{\alpha}e^{\xi n}}
        {|(1-x)^\alpha -(tx)^\alpha e^{\xi+i\alpha \pi}|^2}
        \Biggr] \, \dd t. 
        \label{Eq:CF-fractional-binomial-1}
        \end{align}
    Here, if $\alpha \in \N$, then the last term of \eqref{Eq:CF-fractional-binomial-1} 
    is regarded as $0$. 
\end{tm}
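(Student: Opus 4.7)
The plan is to derive \eqref{Eq:CF-fractional-binomial-1} by applying the generalized binomial theorem (Theorem~\ref{Thm:generalization-binomial-theorem-HH}) to a suitably rescaled sum, mirroring the derivation of the alternative expression of $Z_{\alpha,x}^{(n)}$ given in Section~\ref{Sect:fractional-binomial-distribution}. Starting from \eqref{Eq:MGF-fractional} and Definition~\ref{Def:fractional-binomial-distribution}, I would write
\[
Z_{\alpha,x}^{(n)}\,M_{\alpha,x}^{(n)}(\xi) = \alpha \sum_{j=0}^n \binom{\alpha n}{\alpha j}\bigl(xe^{\xi/\alpha}\bigr)^{\alpha j}(1-x)^{\alpha(n-j)},
\]
having absorbed $e^{\xi j}$ into $x^{\alpha j}$ via $x^{\alpha j} e^{\xi j} = (xe^{\xi/\alpha})^{\alpha j}$, which is legitimate because $xe^{\xi/\alpha}>0$. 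Setting $\lambda := xe^{\xi/\alpha}/(1-x)>0$ and invoking \eqref{Eq:Generalized-binomial-theorem-HH}, I would then multiply both sides of that identity by $(1-x)^{\alpha n}$.

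For the principal sum, distributing $(1-x)^{\alpha n}$ into each $(1+\lambda\omega)^{\alpha n}$ yields $\bigl((1-x)+x\omega e^{\xi/\alpha}\bigr)^{\alpha n}$; the principal-branch identity $(ab)^{\alpha n}=a^{\alpha n}b^{\alpha n}$ is applicable since $a=1-x>0$ forces $\arg(ab)=\arg b$. Isolating the $\omega=1$ summand from the rest of $K_\alpha$ produces the first two groups of terms in \eqref{Eq:CF-fractional-binomial-1}. For the integral term the manipulation is purely algebraic: using $\lambda^\alpha = x^\alpha e^\xi/(1-x)^\alpha$, $\lambda^{\alpha n}=x^{\alpha n}e^{\xi n}/(1-x)^{\alpha n}$, and $|e^{\pm i\alpha\pi}|=1$, multiplying each denominator inside the braces by $(1-x)^{2\alpha}$ transforms them into $|\{t(1-x)\}^\alpha-x^\alpha e^{\xi-i\alpha\pi}|^2$ and $|(1-x)^\alpha-(tx)^\alpha e^{\xi+i\alpha\pi}|^2$, respectively; combined with the prefactor $\alpha\lambda^\alpha(1-x)^{\alpha n}\sin\alpha\pi/\pi = \alpha x^\alpha e^\xi(1-x)^{\alpha(n-1)}\sin\alpha\pi/\pi$, the surplus powers of $x$ and $1-x$ regroup into the numerators $x^\alpha(1-x)^{\alpha(n+1)}$ and $x^{\alpha(n+1)}(1-x)^\alpha e^{\xi n}$ shown in \eqref{Eq:CF-fractional-binomial-1}. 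Dividing through by $Z_{\alpha,x}^{(n)}$ completes the derivation. For $\alpha\in\N$ the factor $\sin\alpha\pi$ vanishes and the integral is declared to be zero by convention, since any singularity of the integrand is suppressed by this prefactor.

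The main obstacle, if any, is the careful bookkeeping of the phases $e^{\pm i\alpha\pi}$ and of the powers of $(1-x)^\alpha$ as they are moved in and out of the two quadratic denominators; beyond that, no new analytical input is required, which is consistent with the authors' remark that the proof proceeds in a manner similar to that of \cite[Theorem~2.15]{HN}.
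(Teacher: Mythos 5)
Your proposal is correct and proceeds exactly as the paper's proof does: rewrite $M_{\alpha,x}^{(n)}(\xi)$ so that the generalized binomial theorem (Theorem~\ref{Thm:generalization-binomial-theorem-HH}) applies with $\lambda=xe^{\xi/\alpha}/(1-x)$, multiply through by $(1-x)^{\alpha n}$, and simplify the integral term by moving $(1-x)^{2\alpha}$ into the quadratic denominators and absorbing $|e^{i\alpha\pi}|=1$. Your extra remark about the principal-branch identity $(ab)^{\alpha n}=a^{\alpha n}b^{\alpha n}$ being legitimate because $a=1-x>0$ is a small but valid point of care that the paper leaves implicit.
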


\begin{proof}
For $\xi \in \R$, we have 
    \begin{align*}
        M_{\alpha, x}^{(n)}(\xi)
        &=\sum_{j=0}^n e^{\xi j} 
        \frac{\alpha}{Z_{\alpha, x}^{(n)}}
        \binom{\alpha n}{\alpha j}x^{\alpha j}(1-x)^{\alpha(n-j)} \\
        &=\frac{\alpha}{Z_{\alpha, x}^{(n)}}
        \sum_{j=0}^n \binom{\alpha n}{\alpha j}
        \left(\frac{xe^{\xi/\alpha}}{1-x}\right)^{\alpha j}(1-x)^{\alpha n}. 
    \end{align*}
We put $\lambda=xe^{\xi/\alpha}/(1-x)>0$. 
We apply Theorem~\ref{Thm:generalization-binomial-theorem-HH} to obtain 
    \begin{align*}
        M_{\alpha, x}^{(n)}(\xi) 
        &=\frac{1}{Z_{\alpha, x}^{(n)}}(1-x)^{\alpha n}  
        \alpha \sum_{j=0}^n \binom{\alpha n}{\alpha j} \lambda^{\alpha j} \\
        &=\frac{1}{Z_{\alpha, x}^{(n)}}(1-x)^{\alpha n} \left[ 
        \sum_{\omega \in K_\alpha}(1+\lambda\omega)^{\alpha n}
        -\frac{\alpha \lambda^\alpha \sin \alpha \pi}{\pi}
        \int_0^1 t^{\alpha-1}(1-t)^{\alpha n}\right. \nonumber \\
        &\quad\times \left.\left\{
        \frac{1}{|t^\alpha-\lambda^\alpha e^{-i\alpha \pi}|^2}
        +\frac{\lambda^{\alpha n}}{|e^{-i\alpha \pi}-(\lambda t)^\alpha|^2}
        \right\} \, \dd t \right] \\
        &=\frac{1}{Z_{\alpha, x}^{(n)}}\left(1-x+xe^{\xi/\alpha }\right)^{\alpha n} 
        +\sum_{\omega \in K_\alpha \setminus \{1\}}
        \frac{1}{Z_{\alpha, x}^{(n)}}
        \left(1-x+x\omega e^{\xi/\alpha}\right)^{\alpha n} \nn \\
        &\quad-\frac{1}{Z_{\alpha, x}^{(n)}}
        \frac{\alpha e^{\xi}\sin \alpha \pi}{\pi}\int_0^1 t^{\alpha-1}(1-t)^{\alpha n} \nonumber \\
        &\quad\times \left[
        \frac{x^\alpha (1-x)^{\alpha(n+1)}}{|\{t(1-x)\}^\alpha-x^\alpha e^{\xi-i\alpha \pi}|^2} 
        +\frac{x^{\alpha (n+1)}(1-x)^{\alpha}e^{\xi n}}
        {|(1-x)^\alpha -(tx)^\alpha e^{\xi+i\alpha \pi}|^2}
        \right] \, \dd t,
    \end{align*}
where the last term is regarded as $0$ if $n \in \N$. 
\end{proof}

\begin{re}\normalfont
    In \cite[Theorem~2.15]{HN}, we have obtained an explicit expression
    of the characteristic function 
    $\varphi_{\alpha, x}^{(n)}(\xi)=\mathbb{E}[e^{i\xi S_{\alpha, x}^{(n)}}]$, $\xi \in \R$, of $\mu_{\alpha, x}^{(n)}$
    through the generalized binomial theorem (see \eqref{Eq:fractional-CF-explicit} below). 
    Since the expression \eqref{Eq:fractional-CF-explicit} of $\varphi_{\alpha, x}^{(n)}(\xi)$ has apparent singularities,
    it is valid only in a neighborhood of $\xi=0$. 
    In contrast of this, \eqref{Eq:CF-fractional-binomial-1} 
    has no restrictions on $\xi \in \R$. 
\end{re}

\section{LDPs and MDPs for fractional binomial distributions}
\label{Sect:LDPs and MDPs}

\subsection{LDPs for fractional binomial distributions}
\label{Subsect:LDPs}

Let $\alpha>0$, $n \in \N$, and $x \in (0, 1)$.
The purpose of this section is to prove Theorem~\ref{Thm:LDP-fractional} by applying the G\"artner--Ellis theorem (Theorem~\ref{Thm:Gartner-Ellis}). 

Before demonstrating Theorem~\ref{Thm:LDP-fractional}, 
we give the following lemma. 

\begin{lm}\label{Lem:log-inequality}
    Let $N \ge2$ and $\{A_{n}^{(k)}\}_{n=1}^\infty$, $k=1, 2, \dots, N$, 
    be complex sequences such that $A_n^{(1)}$ is a positive real number for $n \in \N$. 
    Suppose that a positive sequence $\{a_n\}_{n=1}^\infty$ satisfies 
    $a_n \to \infty$ and there exists  
    \[
        \gamma := \lim_{n \to \infty}\frac{1}{a_n}\log A_{n}^{(1)} \in \R.
    \]
    If it holds that 
    \[
        \limsup_{n \to \infty}\frac{1}{a_n}\log |A_{n}^{(k)}| < \gamma,
        \qquad k=2,3, \dots, N,
    \]
    then we have 
    \[
        \lim_{n \to \infty} \frac{1}{a_n}\mathrm{Log} \left( \sum_{k=1}^N A_{n}^{(k)}\right)=\gamma.
    \]
\end{lm}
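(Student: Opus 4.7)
The plan is to reduce the sum to a perturbation of its dominant term $A_n^{(1)}$ and exploit the continuity of the principal logarithm near $1$. First I would factor out $A_n^{(1)}$, which is legitimate since it is a positive real number, and write
\[
\sum_{k=1}^N A_n^{(k)} = A_n^{(1)}(1+R_n), \qquad R_n := \sum_{k=2}^N \frac{A_n^{(k)}}{A_n^{(1)}}.
\]

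Next I would show $R_n \to 0$. For each $k=2,\ldots,N$, the assumption furnishes some $\varepsilon_k>0$ with $\limsup_{n\to\infty} \frac{1}{a_n}\log|A_n^{(k)}| \le \gamma - 2\varepsilon_k$. Combined with $\frac{1}{a_n}\log A_n^{(1)} \to \gamma$, this yields, for all sufficiently large $n$,
\[
\frac{|A_n^{(k)}|}{A_n^{(1)}} \le \exp\bigl((\gamma-\varepsilon_k)a_n - (\gamma-\tfrac{\varepsilon_k}{2})a_n\bigr) = \exp\bigl(-\tfrac{\varepsilon_k}{2}a_n\bigr),
\]
and since $a_n\to\infty$, each ratio tends to $0$, hence $R_n\to 0$.

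Once $R_n\to 0$, eventually $1+R_n$ lies in a small neighborhood of $1$, so in particular in $\mathbb{C}\setminus(-\infty,0]$, and continuity of $\mathrm{Log}$ at $1$ gives $\mathrm{Log}(1+R_n)\to 0$. Because $A_n^{(1)}>0$, the identity $\mathrm{Log}(A_n^{(1)}(1+R_n)) = \log A_n^{(1)} + \mathrm{Log}(1+R_n)$ holds for all $n$ large enough. Dividing by $a_n$ and letting $n\to\infty$ yields
\[
\lim_{n\to\infty}\frac{1}{a_n}\mathrm{Log}\Bigl(\sum_{k=1}^N A_n^{(k)}\Bigr) = \gamma + 0 = \gamma,
\]
as desired.

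The lemma is essentially a perturbation statement, so no single step is genuinely hard; the only mild subtlety is to keep the principal branch of the complex logarithm well defined and additive over the product $A_n^{(1)}(1+R_n)$, which is handled by noting that $A_n^{(1)}$ is positive and that $1+R_n$ eventually sits near $1$.
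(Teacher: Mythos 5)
Your argument is correct and takes essentially the same route as the paper: factor out the positive dominant term $A_n^{(1)}$, show the remainder ratio tends to zero by comparing exponential rates, and then use additivity and continuity of the principal logarithm at $1$. The only cosmetic difference is that the paper chooses a single pair of thresholds $\gamma'<\gamma''$ uniformly over $k$, whereas you use per-$k$ margins $\varepsilon_k$, which amounts to the same estimate.
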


\begin{proof}
    Let 
        \[
        b_n=\frac{1}{A_n^{(1)}}\sum_{k=2}^N A_n^{(k)}. 
        \]
    We take $\gamma'$ and $\gamma''$ such that 
        \[
        \max_{k=2,3, \dots, N}
        \limsup_{n \to \infty}\frac{1}{a_n}\log |A_n^{(k)}|
        <\gamma' < \gamma'' < \gamma.
        \]
    Then, for sufficiently large $n$, it holds that 
    $A_n^{(1)} \ge e^{\gamma'' a_n}$ and 
    $|A_n^{(k)}| \le e^{\gamma'a_n}$
    for $k=2,3, \dots, N$.
    This implies that 
    $|b_n| \le Ne^{(\gamma'-\gamma'')a_n} \to 0$
    as $n \to \infty$. 
    Therefore, for sufficiently large $n$, we have 
    \[
      \frac{1}{a_n}\mathrm{Log}\left(\sum_{k=1}^N A_n^{(k)}\right)
            =\frac{1}{a_n}\log A_n^{(1)}
            +\frac{1}{a_n}\mathrm{Log}\,(1+b_n) \to \gamma
            \quad (n \to \infty).\qedhere
    \]
    \end{proof}

We are now ready for the proof of Theorem~\ref{Thm:LDP-fractional}. 

\begin{proof}[Proof of Theorem~\ref{Thm:LDP-fractional}]

Let $\Lambda_{\alpha, x}^{(n)}$ be the logarithmic moment generating function 
of $S_{\alpha, x}^{(n)}/n$, that is, 
    \[
    \Lambda_{\alpha, x}^{(n)}(\xi)
    :=\log M_{\alpha, x}^{(n)}\left(\frac{\xi}{n}\right), \qquad \xi \in \R,
    \]
where $M_{\alpha, x}^{(n)}$ is the moment generating function given by \eqref{Eq:MGF-fractional}. 
Suppose that $\alpha \notin \N$. 
Then, it follows from Theorem~\ref{Thm:MGF-fractional-binomial} that 
    \begin{align}
        \frac{1}{n}\Lambda_{\alpha, x}^{(n)}(n\xi)
        &= \frac{1}{n} \log \Biggl[
        \frac{1}{Z_{\alpha, x}^{(n)}}\left(1-x+xe^{\xi/\alpha}\right)^{\alpha n} 
        +\sum_{\omega \in K_\alpha \setminus \{1\}} \frac{1}{Z_{\alpha, x}^{(n)}}
        \left(1-x+x\omega e^{\xi/\alpha}\right)^{\alpha n} \nn \\
        &\quad-\frac{1}{Z_{\alpha, x}^{(n)}}
        \frac{\alpha e^{\xi} x^\alpha (1-x)^{\alpha(n+1)} \sin \alpha \pi}{\pi}\int_0^1 \frac{t^{\alpha-1}(1-t)^{\alpha n}}{|\{t(1-x)\}^\alpha-x^\alpha e^{\xi-i\alpha \pi}|^2} \, \dd t \nonumber \\
        &\quad-\frac{1}{Z_{\alpha, x}^{(n)}}
        \frac{\alpha e^{\xi} x^{\alpha(n+1)} (1-x)^{\alpha} \sin \alpha \pi}{\pi}\int_0^1 \frac{t^{\alpha-1}(1-t)^{\alpha n}e^{\xi n}}
        {|(1-x)^\alpha -(tx)^\alpha e^{\xi+i\alpha \pi}|^2} \, \dd t \Biggr] \nn \\
        &=: \frac{1}{n}\log \left(A_{n}^{(1)}+\sum_{\omega \in K_\alpha \setminus \{1\}}A_{n}^{(2)}(\omega)+A_{n}^{(3)}+A_{n}^{(4)} \right). \label{Eq:LDP-for-GE-0}
    \end{align}
From \eqref{Eq:Z}, we have 
    \begin{align}
        \frac{1}{n}\log A_n^{(1)}
        &= -\frac{1}{n}\log Z_{\alpha, x}^{(n)}
        +\alpha \log \left(1-x+x e^{\xi/\alpha}\right) \nn \\
        &\to \alpha \log \left(1-x+x e^{\xi/\alpha}\right)
        \label{Eq:LDP-for-GE-1}
    \end{align}
as $n \to \infty$. On the other hand, we observe that 
    \begin{align*}
        \left|1-x+x \omega e^{\xi/\alpha}\right|^2
        &=(1-x)^2+2x(1-x)\mathrm{Re}(\omega)e^{\xi/\alpha}+x^2e^{2\xi/\alpha} \nn \\
        &< (1-x)^2+2x(1-x)e^{\xi/\alpha}+x^2e^{2\xi/\alpha} 
        =\left(1-x+x e^{\xi/\alpha}\right)^2.
    \end{align*}
for $\omega \in K_\alpha \setminus \{1\}$. Hence, we have
    \begin{equation}\label{Eq:LDP-for-GE-2}
        \limsup_{n \to \infty}\frac{1}{n}\log |A_n^{(2)}(\omega)| 
        = \alpha \log \left|1-x+x\omega e^{\xi/\alpha}\right|
        < \alpha \log \left(1-x+x e^{\xi/\alpha}\right)
    \end{equation}
for $\omega \in K_\alpha \setminus \{1\}$.
Furthermore, from the inequalities 
    \begin{align}
        \left|\{t(1-x)\}^\alpha-x^\alpha e^{\xi-i\alpha \pi}\right|^2
        &\ge x^{2\alpha}e^{2\xi}\sin^2\alpha \pi >0
        \label{Eq:lower-bound-1} 
        \intertext{and}
        \left|(1-x)^\alpha-(tx)^\alpha e^{\xi+i\alpha \pi}\right|^2
        &=|(1-x)^\alpha e^{-i\alpha \pi}-(tx)^\alpha e^\xi|^2 \nn \\
        &\ge (1-x)^{2\alpha}\sin^2\alpha \pi>0,
        \label{Eq:lower-bound-2}
    \end{align}
it follows that 
    \begin{align}
        \frac{1}{n}\log |A_n^{(3)}|
        &\le \frac{1}{n}\log \left\{\frac{1}{Z_{\alpha, x}^{(n)}}
        \frac{\alpha e^{-\xi} x^{-\alpha} (1-x)^{\alpha(n+1)} }{\pi |\sin \alpha \pi|}
        \int_0^1 t^{\alpha-1}(1-t)^{\alpha n} \, \dd t\right\} \nn \\
        &\le \frac{1}{n}\log \left\{\frac{1}{Z_{\alpha, x}^{(n)}}
        \frac{\alpha e^{-\xi} x^{-\alpha} (1-x)^{\alpha(n+1)} }{\pi |\sin \alpha \pi|}
        \int_0^1 t^{\alpha-1}  \, \dd t\right\} \nn \\
        &= \frac{1}{n}\log \left\{\frac{1}{Z_{\alpha, x}^{(n)}}
        \frac{ e^{-\xi} x^{-\alpha} (1-x)^{\alpha(n+1)} }{\pi |\sin \alpha \pi|}\right\} \nn \\
        &= -\frac{1}{n}\log Z_{\alpha, x}^{(n)}
        + \frac{1}{n}\log \frac{e^{-\xi} x^{-\alpha}}{\pi|\sin \alpha \pi|} + \alpha \left(1+\frac{1}{n}\right)\log (1-x) \nn
    \end{align}
and 
    \begin{align}
        \frac{1}{n}\log |A_n^{(4)}|
        &\le \frac{1}{n}\log \left\{\frac{1}{Z_{\alpha, x}^{(n)}}
        \frac{\alpha e^{\xi(n+1)} x^{\alpha(n+1)} (1-x)^{-\alpha} }{\pi |\sin \alpha \pi|}
        \int_0^1 t^{\alpha-1}(1-t)^{\alpha n} \, \dd t\right\} \nn \\
        &\le \frac{1}{n}\log \left\{\frac{1}{Z_{\alpha, x}^{(n)}}
        \frac{\alpha e^{\xi(n+1)} x^{\alpha(n+1)} (1-x)^{-\alpha} }{\pi |\sin \alpha \pi|}
        \int_0^1 t^{\alpha-1}  \, \dd t\right\} \nn \\
        &= \frac{1}{n}\log \left\{\frac{1}{Z_{\alpha, x}^{(n)}}
        \frac{ e^{\xi(n+1)} x^{\alpha(n+1)} (1-x)^{-\alpha} }{\pi |\sin \alpha \pi|}\right\} \nn \\
        &= -\frac{1}{n}\log Z_{\alpha, x}^{(n)}
        + \frac{1}{n}\log \frac{(1-x)^{-\alpha}}{\pi|\sin \alpha \pi|}+\xi\left(1+\frac{1}{n}\right)+ \alpha \left(1+\frac{1}{n}\right)\log x. \nn
    \end{align}
Thus, we have 
    \begin{align}
        \limsup_{n \to \infty}\frac{1}{n}\log |A_n^{(3)}|
        &\le \alpha \log (1-x) < \alpha \log\left(1-x+xe^{\xi/\alpha}\right), 
        \label{Eq:LDP-for-GE-3}\\
        \limsup_{n \to \infty}\frac{1}{n}\log |A_n^{(4)}|
        &\le \xi+\alpha \log x = \alpha \log \left(x e^{\xi/\alpha}\right) < \alpha \log(1-x+xe^{\xi/\alpha}). 
        \label{Eq:LDP-for-GE-4}
    \end{align}
From \eqref{Eq:LDP-for-GE-0}, \eqref{Eq:LDP-for-GE-1}, \eqref{Eq:LDP-for-GE-2}, 
\eqref{Eq:LDP-for-GE-3}, and \eqref{Eq:LDP-for-GE-4}, 
Lemma~\ref{Lem:log-inequality} allows us to have the existence of 
the limit
    \[
    \Lambda_{\alpha, x}(\xi):=
    \lim_{n \to \infty}\frac{1}{n}\Lambda_{\alpha, x}^{(n)}(n\xi)
    =\alpha \log (1-x+xe^{\xi/\alpha}), \qquad \xi \in \R, 
    \]
with $\mathcal{D}_{\Lambda_{\alpha, x}}=\R$, which turns out to satisfy 
(a), (b), and (c) in Theorem~\ref{Thm:Gartner-Ellis}-(2). 
Theorem~\ref{Thm:Gartner-Ellis} implies that 
the sequence of laws of $\{S_{\alpha, x}^{(n)}/n\}_{n=1}^\infty$
does satisfy the LDP with speed $n$ and 
good rate function $I_{\alpha, x}^{(\mathrm{L})}$ defined by \eqref{Eq:rate-function-LDP}, 
since we can easily confirm that the Fenchel--Legendre transform of $\Lambda_{\alpha, x}$ is $I_{\alpha, x}^{(\mathrm{L})}$.

The case where $\alpha \in \N$ is more simply proved.
\end{proof}

\subsection{MDPs for fractional binomial distributions}
\label{Subsect:MDPs}

In this section, we give a proof of Theorem~\ref{Thm:MDP-fractional}.

\begin{proof}[Proof of Theorem~\ref{Thm:MDP-fractional}]
Let $\alpha>0$, $n \in \N$, and $x \in (0, 1)$.
Suppose that a positive sequence $\{c_n\}_{n=1}^\infty$
satisfies \eqref{Eq:moderate-rate}. 
    Let $\widetilde{\Lambda}_{\alpha, x}^{(n)}$ be the logarithmic moment generating function 
    of $Y_{\alpha, x}^{(n)}$, that is, 
        \[
            \widetilde{\Lambda}_{\alpha, x}^{(n)}(\xi)
            :=\log \mathbb{E}\left[e^{\xi Y_{\alpha, x}^{(n)}}\right]
            =\log \left(e^{-nx\xi/c_n}M_{\alpha, x}^{(n)}\left(\frac{\xi}{c_n}\right)\right), \qquad \xi \in \R.
        \]
    Suppose that $\alpha \notin \N$. 
    Then, Theorem~\ref{Thm:MGF-fractional-binomial} implies
        \begin{align}
            &\frac{n}{c_n^2}\widetilde{\Lambda}_{\alpha, x}^{(n)}
            \left(\frac{c_n^2}{n}\xi\right) \nn \\
            &=\frac{n}{c_n^2} \log \Biggl[ 
            \frac{1}{Z_{\alpha, x}^{(n)}} e^{-c_n x\xi}
            \left\{1-x+xe^{c_n\xi/(\alpha n)}\right\}^{\alpha n} \nn \\
            &\qquad+\sum_{\omega \in K_\alpha \setminus \{1\}} \frac{1}{Z_{\alpha, x}^{(n)}}e^{-c_n x\xi}
            \left\{1-x+x\omega e^{c_n\xi/(\alpha n)}\right\}^{\alpha n} \nn \\
            &\qquad-\frac{1}{Z_{\alpha, x}^{(n)}}
            \frac{\alpha e^{-c_n\xi(x-1/n)} x^\alpha (1-x)^{\alpha(n+1)} \sin \alpha \pi}{\pi}\int_0^1 \frac{t^{\alpha-1}(1-t)^{\alpha n}}{|\{t(1-x)\}^\alpha-x^\alpha e^{(c_n\xi/n)-i\alpha \pi}|^2} \, \dd t \nonumber \\
            &\qquad-\frac{1}{Z_{\alpha, x}^{(n)}}
            \frac{\alpha e^{-c_n\xi(x-1/n)} x^{\alpha(n+1)} (1-x)^{\alpha} \sin \alpha \pi}{\pi}\int_0^1 \frac{t^{\alpha-1}(1-t)^{\alpha n}e^{c_n\xi }}
            {|(1-x)^\alpha -(tx)^\alpha e^{(c_n\xi/n)+i\alpha \pi}|^2} \, \dd t \Biggr] \nn \\
            &=: \frac{n}{c_n^2}\log \left(\widetilde{A}_{n}^{(1)}+\sum_{\omega \in K_\alpha \setminus \{1\}}\widetilde{A}_{n}^{(2)}(\omega)+\widetilde{A}_{n}^{(3)}+\widetilde{A}_{n}^{(4)} \right). \label{Eq:MDP-for-GE-0}
        \end{align}
        Then, it holds that
        \begin{align}
            \frac{n}{c_n^2}\log \widetilde{A}_n^{(1)}
            &= -\frac{n}{c_n^2}\log Z_{\alpha, x}^{(n)}
            -\frac{n}{c_n}x\xi
            +\alpha \frac{n^2}{c_n^2} \log \left\{1-x+x e^{c_n\xi/(\alpha n)}\right\}. \nn 
        \end{align}
        Since we have 
            \begin{align*}
                \alpha \frac{n^2}{c_n^2} \log \left\{1-x+x e^{c_n\xi/(\alpha n)}\right\} 
                &=\alpha \frac{n^2}{c_n^2} \log \left\{1-x+x 
                \left( 1+\frac{c_n}{n}\frac{\xi}{\alpha}+
                \frac{c_n^2}{n^2}\frac{\xi^2}{2\alpha^2 }+O\left(\frac{c_n^3}{n^3}\right)\right)\right\} \\
                &=\alpha \frac{n^2}{c_n^2} \log 
                \left(1+\frac{c_n}{n}\frac{x \xi}{\alpha} +\frac{c_n^2}{n^2}\frac{x\xi^2}{2\alpha^2}+O\left(\frac{c_n^3}{n^3}\right)\right) \\
                &= \frac{n}{c_n}x\xi + \frac{1}{2\alpha}x(1-x)\xi^2
                +O\left(\frac{c_n}{n}\right)
            \end{align*}
        as $n \to \infty$, it follows from \eqref{Eq:moderate-rate}
        that 
            \begin{equation}\label{Eq:MDP-for-GE-1}
                \lim_{n \to \infty}
                \frac{n}{c_n^2}\log \widetilde{A}_n^{(1)}
                =\frac{1}{2\alpha}x(1-x)\xi^2. 
            \end{equation}
        Moreover, we have 
        \[
            \frac{n}{c_n^2}\log |\widetilde{A}_n^{(2)}(\omega)| 
            = -\frac{n}{c_n^2}\log Z_{\alpha, x}^{(n)}
            -\frac{n}{c_n}x\xi
            +\alpha \frac{n^2}{c_n^2} \log \left|1-x+x\omega e^{c_n\xi/(\alpha n)}\right|
        \]
        for $\omega \in K_\alpha \setminus \{1\}$. 
        Since 
        \[
        \log \left|1-x+x\omega e^{c_n\xi/(\alpha n)}\right|\to \log|1-x+x\omega|<0
        \quad\text{as $n \to \infty$},
        \]
        we obtain 
        \begin{align}
            &\limsup_{n \to \infty}\frac{n}{c_n^2}\log |\widetilde{A}_n^{(2)}(\omega)|=-\infty < \frac{1}{2\alpha}x(1-x)\xi^2
            \label{Eq:MDP-for-GE-2}
        \end{align}
        for $\omega \in K_\alpha \setminus \{1\}$. 
        On the other hand, by using \eqref{Eq:moderate-rate}, \eqref{Eq:lower-bound-1}, and \eqref{Eq:lower-bound-2}, we have 
            \begin{align}
                \frac{n}{c_n^2}\log |\widetilde{A}_n^{(3)}| 
                &\le \frac{n}{c_n^2}\log 
                \left\{\frac{1}{Z_{\alpha, x}^{(n)}}
                \frac{\alpha e^{-c_n\xi(x+1/n)} x^{-\alpha}(1-x)^{\alpha(n+1)}}{\pi|\sin \alpha \pi|}\int_0^1 t^{\alpha-1}(1-t)^{\alpha n} \, \dd t\right\}  \nn \\
                &\le \frac{n}{c_n^2}\log 
                \left\{\frac{1}{Z_{\alpha, x}^{(n)}}
                \frac{\alpha e^{-c_n\xi(x+1/n)} x^{-\alpha}(1-x)^{\alpha(n+1)}}{\pi|\sin \alpha \pi|}\int_0^1 t^{\alpha-1} \, \dd t\right\} \nn \\
                &=\frac{n}{c_n^2}\log 
                \left\{\frac{1}{Z_{\alpha, x}^{(n)}}
                \frac{e^{-c_n\xi(x+1/n)} x^{-\alpha}(1-x)^{\alpha(n+1)}}{\pi|\sin \alpha \pi|}\right\} \nn \\
                &= -\frac{n}{c_n^2}\log Z_{\alpha, x}^{(n)}
                -\frac{n}{c_n}\left(x+\frac{1}{n}\right)\xi+\alpha\left(\frac{n^2}{c_n^2}
                +\frac{n}{c_n^2}\right)\log(1-x)
                +\frac{n}{c_n^2}\log 
                 \frac{ x^{-\alpha}}{\pi|\sin \alpha \pi|} \nn 
            \end{align} 
        and 
            \begin{align}
                \frac{n}{c_n^2}\log |\widetilde{A}_n^{(4)}| 
                &\le \frac{n}{c_n^2}\log 
                \left\{\frac{1}{Z_{\alpha, x}^{(n)}}
                \frac{\alpha e^{-c_n\xi(x-1-1/n)} x^{\alpha(n+1)}(1-x)^{-\alpha}}{\pi|\sin \alpha \pi|}\int_0^1 t^{\alpha-1}(1-t)^{\alpha n} \, \dd t
                \right\}  \nn \\
                &\le \frac{n}{c_n^2}\log 
                \left\{\frac{1}{Z_{\alpha, x}^{(n)}}
                \frac{\alpha e^{-c_n\xi(x-1-1/n)} x^{\alpha(n+1)} (1-x)^{-\alpha}}{\pi|\sin \alpha \pi|}\int_0^1 t^{\alpha-1} \, \dd t\right\} \nn \\
                &=\frac{n}{c_n^2}\log 
                \left\{\frac{1}{Z_{\alpha, x}^{(n)}}
                \frac{e^{-c_n\xi(x-1-1/n)} x^{\alpha(n+1)} (1-x)^{-\alpha}}{\pi|\sin \alpha \pi|}\right\} \nn \\
                &= -\frac{n}{c_n^2}\log Z_{\alpha, x}^{(n)}
                -\frac{n}{c_n}\left(x-1-\frac{1}{n}\right)\xi
                +\alpha\left(\frac{n^2}{c_n^2}+\frac{n}{c_n^2}\right)\log x
                +\frac{n}{c_n^2}\log 
                \frac{ (1-x)^{-\alpha}}{\pi|\sin \alpha \pi|}.  \nn 
            \end{align} 
        These lead to
            \begin{equation}\label{Eq:MDP-for-GE-3}
                \limsup_{n \to \infty}
                \frac{n}{c_n^2}\log |\widetilde{A}_n^{(j)}|
                =-\infty < \frac{1}{2\alpha}x(1-x)\xi^2, 
                \qquad j=3, 4.   
            \end{equation}
        Therefore, \eqref{Eq:MDP-for-GE-0}, \eqref{Eq:MDP-for-GE-1}, \eqref{Eq:MDP-for-GE-2}, and \eqref{Eq:MDP-for-GE-3} enable us to apply
        Lemma~\ref{Lem:log-inequality} to have the limit
            \[
                \widetilde{\Lambda}_{\alpha, x}(\xi):=
                \lim_{n \to \infty}\frac{n}{c_n^2}\widetilde{\Lambda}_{\alpha, x}^{(n)}\left(\frac{c_n^2}{n}\xi\right)
                =\frac{1}{2\alpha}x(1-x)\xi^2, \qquad \xi \in \R, 
            \]
        with $\mathcal{D}_{\widetilde{\Lambda}_{\alpha, x}}=\R$. 
        This satisfies (a), (b), and (c) in Theorem~\ref{Thm:Gartner-Ellis}-(2), which means that  
        the sequence of laws of $\{Y_{\alpha, x}^{(n)}\}_{n=1}^\infty$
        satisfies the MDP with speed $n/c_n^2$ and  
        quadratic rate function $I_{\alpha, x}^{(\mathrm{M})}$ given by \eqref{Eq:rate-function-LDP}, since 
        the Fenchel--Legendre transform of $\widetilde{\Lambda}_{\alpha, x}$ is 
        \[
        (\widetilde{\Lambda}_{\alpha, x})^*(z)
        =\sup_{\xi \in \R}\left(\xi z - \widetilde{\Lambda}_{\alpha, x}(\xi)\right)=\left.\left(\xi z - \widetilde{\Lambda}_{\alpha, x}(\xi)\right)\right|_{\xi=\frac{\alpha z}{x(1-x)}}
        =\frac{\alpha}{2x(1-x)}z^2. 
        \]
        The case where $\alpha \in \N$ is proved more simply.   
\end{proof}

\section{Berry--Esseen type estimate of fractional binomial distributions}
\label{Sect:Berry-Esseen}

Let $\alpha>0$, $n \in \N$, and $x \in (0, 1)$. 
The purpose of this section is to obtain a
Berry--Esseen type estimate of the fractional binomial distribution
$\mu_{\alpha, x}^{(n)}$ (Theorem~\ref{Thm:Berry-Esseen}). 
Recall that $\widetilde{\mu}_{\alpha, x}^{(n)}$ is the law of 
the normalized random variable $\widetilde{S}_{\alpha, x}^{(n)}$
defined by \eqref{Eq:normalized-mu}. 
The distribution functions of $\widetilde{\mu}_{\alpha, x}^{(n)}$ and 
$N(0, 1)$ are denoted by $\widetilde{F}_{\alpha, x}^{(n)}(z)$ and $\Phi(z)$, respectively. 

At the beginning, we present several elementary but useful inequalities
without proofs.  

\begin{lm}\label{Lem:inequalities}
    \begin{enumerate}
        \item[{\rm (1)}] For $\theta \in \R$, we have 
            \[
                \left|e^{i\theta} - \left( 1+i\theta - \frac{\theta^2}{2}\right)\right|
                \le \frac{1}{6}|\theta|^3.
            \]
        \item[{\rm (2)}] For $z \in \mathbb{C}$, we have 
            $|e^z-1| \le |z|e^{|z|}$.
        \item[{\rm (3)}] For $a \ge 0, b \ge 0$, and $\theta \in \R$, we have 
            $|a-be^{i\theta}| \ge \sqrt{a^2+b^2}|\sin(\theta/2)|$.         
    \end{enumerate}
\end{lm}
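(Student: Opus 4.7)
The plan is to prove the three inequalities independently by elementary means; none involve anything beyond basic calculus and trigonometric identities, so the main task is to pick a convenient representation in each case.

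For \textbf{(1)}, I would apply Taylor's formula with integral remainder to $f(s)=e^{is}$ about $s=0$, stopping at the quadratic term. This gives
\[
e^{i\theta}-\Bigl(1+i\theta-\frac{\theta^2}{2}\Bigr)=\int_0^\theta \frac{(\theta-s)^2}{2}\,(i^3 e^{is})\,\dd s.
\]
Since $|i^3 e^{is}|=1$, taking absolute values and bounding the integrand in modulus yields $\int_0^{|\theta|}(|\theta|-s)^2/2\,\dd s=|\theta|^3/6$, as claimed.

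For \textbf{(2)}, I would expand $e^z-1=\sum_{k=1}^\infty z^k/k!=z\sum_{k=0}^\infty z^k/(k+1)!$ and apply the triangle inequality together with $(k+1)!\ge k!$ to obtain
\[
|e^z-1|\le |z|\sum_{k=0}^\infty \frac{|z|^k}{(k+1)!}\le |z|\sum_{k=0}^\infty \frac{|z|^k}{k!}=|z|e^{|z|}.
\]

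For \textbf{(3)}, the key observation is the half-angle factorization
\[
a-be^{i\theta}=e^{i\theta/2}\bigl(ae^{-i\theta/2}-be^{i\theta/2}\bigr)=e^{i\theta/2}\bigl((a-b)\cos(\theta/2)-i(a+b)\sin(\theta/2)\bigr).
\]
Since $|e^{i\theta/2}|=1$, this gives
\[
|a-be^{i\theta}|^2=(a-b)^2\cos^2(\theta/2)+(a+b)^2\sin^2(\theta/2).
\]
Using $a,b\ge 0$, we have $(a+b)^2=a^2+b^2+2ab\ge a^2+b^2$, and the first term on the right is nonnegative. Discarding it and applying the bound on $(a+b)^2$ yields $|a-be^{i\theta}|^2\ge (a^2+b^2)\sin^2(\theta/2)$; taking square roots concludes the proof.

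I expect no genuine obstacle here. The only mildly non-obvious choice is the factor $e^{i\theta/2}$ in part (3), which is what makes the nonnegativity of the cross term $2ab$ visible and allows the hypothesis $a,b\ge 0$ to be used.
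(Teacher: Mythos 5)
Your three arguments are all correct: the integral-form Taylor remainder for $e^{is}$ gives exactly $|\theta|^3/6$ in (1), the series manipulation with $(k+1)!\ge k!$ is sound in (2), and the half-angle factorization in (3) correctly yields $|a-be^{i\theta}|^2=(a-b)^2\cos^2(\theta/2)+(a+b)^2\sin^2(\theta/2)\ge(a^2+b^2)\sin^2(\theta/2)$, using $2ab\ge 0$. Note that the paper itself states this lemma explicitly \emph{without} proofs, only citing its companion paper (Lemma~2.14 of [HN24]) for part (3), so there is no in-paper argument to compare against; your write-up supplies self-contained standard proofs of all three parts, and in particular your factorization in (3) is the natural way to make the hypothesis $a,b\ge 0$ enter, which is presumably close in spirit to the cited proof.
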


\noindent
See \cite[Lemma~2.14]{HN} for the proof of Lemma~\ref{Lem:inequalities}-(3).

The following will play an important role in the proof
of Theorem~\ref{Thm:Berry-Esseen}, which asserts that 
a quantitative estimate of $|\widetilde{F}_{\alpha, x}^{(n)}(z) - \Phi(z)|$ can be reduced to that of their characteristic functions.

\begin{lm}[cf.~{\cite[Lemma~2 in Section~7.4]{Chung}}]
\label{Lem:Chung}
    Let $F \colon \R \to \R$ be a distribution function and 
    $G \colon \R \to \R$ a differentiable function of bounded variation
    satisfying 
        \[
        M:=\dis\sup_{z \in \R}|G'(z)|<\infty \quad \text{and} \quad 
        \int_{\R}|F(z)-G(z)| \, \dd z < \infty.
        \]
    We write 
        \[
            f(\xi)=\int_{\R} e^{i \xi z} \, \dd F(z) \quad \text{and} \quad  
            g(\xi)=\int_{\R} e^{i \xi z} \, \dd G(z)
        \]
    for $\xi \in \R$. 
    Then, we have 
        \[
            \sup_{z \in \R}|F(z)-G(z)| \le \frac{2}{\pi}\int_0^T \frac{|f(\xi)-g(\xi)|}{\xi} \, \dd \xi + \frac{24M}{\pi T}
        \]
    for all $T>0$. 
\end{lm}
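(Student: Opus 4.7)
The plan is to follow Esseen's classical Fourier-smoothing approach. Set $\Delta(z) := F(z) - G(z)$ and $\eta := \sup_{z \in \R}|\Delta(z)|$, which we may assume positive (otherwise there is nothing to prove). Since $G$ is of bounded variation it has finite limits at $\pm \infty$; together with $\Delta \in L^1(\R)$ and the fact that $F$ is a distribution function, this forces $\Delta(z) \to 0$ as $|z| \to \infty$. After possibly symmetrizing the argument (if the sup is attained on the negative side, one evaluates the convolution below at $z_0 - h$ instead of $z_0 + h$), I would fix $z_0 \in \R$ with $\Delta(z_0) > \eta - \epsilon$ for arbitrarily small $\epsilon > 0$. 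The monotonicity of $F$ combined with $|G'| \le M$ then yields the one-sided Lipschitz bound $\Delta(z_0+s) \ge \Delta(z_0) - M s$ for every $s \ge 0$; this is the only place in the argument where the monotonicity of $F$ and the Lipschitz property of $G$ are used.

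The Fourier step uses the Fej\'er kernel
\[
K_T(y) := \frac{1 - \cos(T y)}{\pi T y^2}, \qquad y \in \R,
\]
which is non-negative, integrates to $1$, has Fourier transform $\widehat{K_T}(\xi) = (1-|\xi|/T)_+$, and obeys the tail bound $\int_{|y|>h} K_T(y)\,\dd y \le 4/(\pi T h)$ since $K_T(y) \le 2/(\pi T y^2)$. A Stieltjes integration by parts, using $\Delta(\pm\infty) = 0$, gives $\int_\R e^{\ii \xi z} \Delta(z)\,\dd z = (g(\xi) - f(\xi))/(\ii \xi)$, and applying Fourier inversion to the convolution $\Delta \ast K_T$ (which is integrable with Fourier transform in $L^1$, since $\widehat{K_T}$ has compact support) produces the uniform upper bound
\[
\sup_{z \in \R} |(\Delta \ast K_T)(z)|
\;\le\; \frac{1}{2\pi}\int_{-T}^{T}\frac{|f(\xi) - g(\xi)|}{|\xi|}\,\widehat{K_T}(\xi)\,\dd \xi
\;\le\; \frac{1}{\pi}\int_0^T \frac{|f(\xi) - g(\xi)|}{\xi}\,\dd \xi.
\]

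For a matching lower bound I would evaluate $(\Delta \ast K_T)(z_0 + h)$ with the specific choice $h := \eta/(2 M)$. Splitting the convolution integral at $|y| = h$: on $[-h, h]$ the argument $z_0+h-y$ stays in $[z_0, z_0 + \eta/M]$, so the one-sided bound above gives $\Delta(z_0+h-y) \ge \eta - \epsilon - M(h-y)$; the evenness of $K_T$ annihilates the linear term in $y$ and leaves the contribution $(\eta/2 - \epsilon)\int_{-h}^h K_T\,\dd y$. The tail $|y| > h$ is controlled by the trivial bound $|\Delta| \le \eta$ combined with the tail estimate on $K_T$, producing
\[
(\Delta \ast K_T)(z_0 + h)
\;\ge\; \frac{\eta}{2} - \epsilon - \frac{3\eta}{2}\int_{|y|>h} K_T(y)\,\dd y
\;\ge\; \frac{\eta}{2} - \epsilon - \frac{12 M}{\pi T}.
\]
Comparing with the upper bound and letting $\epsilon \downarrow 0$ rearranges at once to $\eta \le \frac{2}{\pi}\int_0^T |f(\xi)-g(\xi)|/\xi\,\dd \xi + 24M/(\pi T)$, which is the claim.

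The main technical subtlety will be justifying the Fourier identity $\hat\Delta(\xi) = (g(\xi)-f(\xi))/(\ii \xi)$ (requiring the careful disposal of boundary terms in the Stieltjes integration by parts, which rests on $\Delta(\pm\infty) = 0$). The emergence of the sharp constant $24$ is not automatic and depends on two balanced choices: the Fej\'er kernel (with its tail decay $\sim 1/(Ty^2)$) and the interval half-length $h = \eta/(2M)$, which optimally trades the bias $M h \sim \eta/2$ from the Lipschitz bound against the tail loss $\eta/(Th) \sim M/T$ from the kernel.
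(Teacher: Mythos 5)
Your argument is correct: it is the classical Esseen--Fej\'er smoothing proof (one-sided Lipschitz comparison at a near-extremal point, convolution with the kernel $(1-\cos Ty)/(\pi T y^2)$, Fourier inversion on the compactly supported transform, and the balanced choice $h=\eta/(2M)$), and the constant bookkeeping leading to $24M/(\pi T)$ checks out. Note that the paper itself does not prove this lemma but simply cites Chung's book, and your proof is essentially a faithful reconstruction of that cited classical argument, so there is nothing to flag beyond routine care with the vanishing boundary terms $\Delta(\pm\infty)=0$, which you already address.
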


Let $\varphi_{\alpha, x}^{(n)}(\xi)=\E[e^{i\xi \widetilde{S}_{\alpha, x}^{(n)}}]$, $\xi \in \R$, be the characteristic function of 
$\widetilde{S}_{\alpha, x}^{(n)}$.  
The following is the key estimate for the proof of Theorem~\ref{Thm:Berry-Esseen}.

\begin{lm}
\label{Lem:CF-estimate} 
   There exist 
   $C_1, C_2, C_3>0$,  
   $\gamma>0$ and $\delta_1>0$
   such that
    \begin{align*}
        \left| \widetilde{\varphi}_{\alpha, x}^{(n)}(\xi) - e^{-\xi^2/2}\right|
        &\le 
        C_{1}\left( \xi e^{-\delta_1 n}+\xi^3n^{-1/2}\right)
        \exp\left(C_2\xi-\frac{1}{6}\xi^2\right)
        +C_3\xi e^{-\delta n}
    \end{align*}
   for $n \in \N$ and $0 \le \xi \le \gamma\sqrt{n}$.
\end{lm}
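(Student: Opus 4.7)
The plan is to start from the explicit formula in Theorem~\ref{Thm:MGF-fractional-binomial}, apply it with $\xi$ replaced by $i\xi/\sigma_n$, and multiply the result by $e^{-i\xi m_n/\sigma_n}$, where $m_n := \mathbb{E}[S_{\alpha,x}^{(n)}]$ and $\sigma_n^2 := \mathrm{Var}(S_{\alpha,x}^{(n)})$. This yields
\[
    \widetilde\varphi_{\alpha,x}^{(n)}(\xi)
    = A_n^{(1)}(\xi) + \mathrm{Res}_n(\xi),
    \qquad
    A_n^{(1)}(\xi):=\frac{1}{Z_{\alpha,x}^{(n)}}e^{-i\xi m_n/\sigma_n}\bigl(1-x+xe^{i\xi/(\alpha\sigma_n)}\bigr)^{\alpha n},
\]
where $\mathrm{Res}_n(\xi)$ collects the $\omega\in K_\alpha\setminus\{1\}$ contributions and the two integral terms. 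The strategy is to approximate $A_n^{(1)}(\xi)$ by $e^{-\xi^2/2}$, and to control $\mathrm{Res}_n(\xi)$ by exploiting that $\mathrm{Res}_n(0)$ cancels the corresponding defect of $A_n^{(1)}(0)$.

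For the main term, I would write $A_n^{(1)}(\xi) = \exp(L_n(\xi))$ and Taylor expand the logarithm. Lemma~\ref{Lem:inequalities}-(1) applied to $e^{i\theta}$, combined with a standard series for $\log(1+w)$, yields
\[
    \log(1-x+xe^{i\theta}) = ix\theta - \tfrac{x(1-x)}{2}\theta^2 + r_3(\theta),\qquad |r_3(\theta)|\le C|\theta|^3,
\]
valid on a fixed neighborhood $|\theta|\le \theta_0$. Substituting $\theta=\xi/(\alpha\sigma_n)$ and using \eqref{Eq:mean-variance-fractional} to replace $m_n$ by $nx$ and $\sigma_n^2$ by $nx(1-x)/\alpha$, together with \eqref{Eq:Z} to handle $\log Z_{\alpha,x}^{(n)}$, one obtains $L_n(\xi) = -\xi^2/2 + R_n(\xi)$ with $|R_n(\xi)| \le C_4(\xi e^{-\delta_1 n} + \xi^3/\sqrt n)$ for $0\le\xi\le\gamma\sqrt n$, provided $\gamma>0$ is small enough to keep $|\theta|\le\theta_0$. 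The pointwise safety bound $\mathrm{Re}(L_n(\xi))\le -\xi^2/6$ follows from $1-\cos\theta\ge 2\theta^2/\pi^2$ on $|\theta|\le\pi$, which gives $\alpha n\log|1-x+xe^{i\theta}|\le -(2/\pi^2)\xi^2 + O(e^{-\delta n})$. Combining this with Lemma~\ref{Lem:inequalities}-(2) applied to $e^{R_n(\xi)}-1$, and noting that $C_4\xi^3/\sqrt n \le C_4\gamma\xi^2$ on $[0,\gamma\sqrt n]$, concludes
\[
    |A_n^{(1)}(\xi) - e^{-\xi^2/2}| \le C_1(\xi e^{-\delta_1 n} + \xi^3/\sqrt n)\exp(C_2\xi - \tfrac{1}{6}\xi^2),
\]
with the $e^{C_2\xi}$ factor serving as an absorber for the $e^{|R_n|}$ coming out of Lemma~\ref{Lem:inequalities}-(2).

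The residual is treated by exploiting cancellation at the origin. Since $\widetilde\varphi_{\alpha,x}^{(n)}(0)=1$ and $A_n^{(1)}(0)=1/Z_{\alpha,x}^{(n)}$, one has $\mathrm{Res}_n(0)=1-1/Z_{\alpha,x}^{(n)}=O(e^{-\delta n})$, which exactly balances the defect $A_n^{(1)}(0)-1$. Writing
\[
    \widetilde\varphi_{\alpha,x}^{(n)}(\xi) - e^{-\xi^2/2} = \bigl(A_n^{(1)}(\xi) - e^{-\xi^2/2}\bigr) + \bigl(\mathrm{Res}_n(\xi) - \mathrm{Res}_n(0)\bigr) + \bigl(A_n^{(1)}(0)-1+\mathrm{Res}_n(0)\bigr),
\]
the last bracket vanishes, so I am reduced to estimating $\mathrm{Res}_n(\xi)-\mathrm{Res}_n(0)$ by $\xi\sup_{0\le t\le\xi}|\mathrm{Res}_n'(t)|$. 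For the $\omega\in K_\alpha\setminus\{1\}$ pieces, $|1-x+x\omega e^{i\xi/(\alpha\sigma_n)}|$ remains bounded by some $r_\omega<1$ uniformly in $\xi$ on the specified range, so termwise differentiation yields a bound of order $\sqrt n\cdot r_\omega^{\alpha n-1}=O(e^{-\delta n})$. For the two integral terms, exponential decay in $n$ is furnished by the factors $(1-x)^{\alpha(n+1)}$ and $x^{\alpha(n+1)}$, while Lemma~\ref{Lem:inequalities}-(3) keeps the complex denominators bounded below by a positive constant. Together this yields $|\mathrm{Res}_n'(\xi)|\le Ce^{-\delta n}$ uniformly, hence $|\mathrm{Res}_n(\xi)-\mathrm{Res}_n(0)|\le C_3\xi e^{-\delta n}$, and assembling the pieces gives the claim.

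The main obstacle I anticipate is precisely the bookkeeping of the cancellation at $\xi=0$: without explicitly combining the defect $A_n^{(1)}(0)-1$ with $\mathrm{Res}_n(0)$, one is left with a constant $O(e^{-\delta n})$ error that fails to match the $\xi$ factor on the right-hand side. A secondary technical hurdle is the uniform control of $\mathrm{Res}_n'$ on $[0,\gamma\sqrt n]$ for the two integral terms, where Lemma~\ref{Lem:inequalities}-(3) is essential to prevent the complex denominators from approaching zero as $e^{i\xi/\sigma_n}$ rotates.
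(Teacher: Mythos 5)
Your high-level plan matches the paper's (decompose the normalized characteristic function into a main term and a residual, Taylor-expand the logarithm of the main term using \eqref{Eq:mean-variance-fractional} and \eqref{Eq:Z}, control the residual via its derivative using Lemma~\ref{Lem:inequalities}-(3) for the denominators), but your bookkeeping of the cancellation at $\xi=0$ — which you correctly flag as the crux — does not in fact close. Two linked problems:

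First, your decomposition is not an identity. Expanding
\[
\bigl(A_n^{(1)}(\xi)-e^{-\xi^2/2}\bigr)
+\bigl(\mathrm{Res}_n(\xi)-\mathrm{Res}_n(0)\bigr)
+\bigl(A_n^{(1)}(0)-1+\mathrm{Res}_n(0)\bigr)
= \widetilde\varphi_{\alpha,x}^{(n)}(\xi)-e^{-\xi^2/2}
+\bigl(A_n^{(1)}(0)-1\bigr),
\]
so the two sides differ by $A_n^{(1)}(0)-1=1/Z_{\alpha,x}^{(n)}-1=O(e^{-\delta n})$, precisely the $\xi$-independent error you were hoping to eliminate. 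The vanishing of the last bracket does not rescue it. Second, and for the same reason, your claimed bound $|R_n(\xi)|\le C_4(\xi e^{-\delta_1 n}+\xi^3/\sqrt n)$ on $R_n(\xi)=\mathrm{Log}\,A_n^{(1)}(\xi)+\xi^2/2$ is false at $\xi=0$: since $A_n^{(1)}(0)=1/Z_{\alpha,x}^{(n)}$, one has $R_n(0)=-\log Z_{\alpha,x}^{(n)}\neq 0$, whereas the right side vanishes. Both defects stem from folding the $1/Z_{\alpha,x}^{(n)}$ into the main term, which destroys the exact normalization $A_n^{(1)}(0)=1$ that the expansion needs.

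The paper sidesteps this by keeping $1/Z_{\alpha,x}^{(n)}$ as an overall prefactor: with $I_{\alpha,x}^{(n)}(\xi)$ the main term \emph{without} the normalization, $I_{\alpha,x}^{(n)}(0)=1$ exactly, so $\mathrm{Log}\,I_{\alpha,x}^{(n)}(\xi)+\xi^2/2$ genuinely vanishes at $\xi=0$ and the $\xi(\cdots)$-type bound is legitimate. Using $Z_{\alpha,x}^{(n)}=1+J_{\alpha,x}^{(n)}(0)$, the correct decomposition is
\[
\widetilde\varphi_{\alpha,x}^{(n)}(\xi)-e^{-\xi^2/2}
=\frac{1}{Z_{\alpha,x}^{(n)}}\Bigl[
\bigl(I_{\alpha,x}^{(n)}(\xi)-e^{-\xi^2/2}\bigr)
+J_{\alpha,x}^{(n)}(0)\bigl(1-e^{-\xi^2/2}\bigr)
+\bigl(J_{\alpha,x}^{(n)}(\xi)-J_{\alpha,x}^{(n)}(0)\bigr)
\Bigr],
\]
which has \emph{three} pieces, not two: the middle piece $J_{\alpha,x}^{(n)}(0)(1-e^{-\xi^2/2})$ is the one your scheme is missing. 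It carries the factor $1-e^{-\xi^2/2}=O(\xi^2)=O(\xi)$ on the admissible range (via Lemma~\ref{Lem:inequalities}-(2)), turning the $O(e^{-\delta n})$ defect into the required $\xi$-proportional error. Equivalently, if you insist on keeping $1/Z_{\alpha,x}^{(n)}$ inside $A_n^{(1)}$, the valid identity is $\widetilde\varphi-e^{-\xi^2/2}=(A_n^{(1)}(\xi)-A_n^{(1)}(0)e^{-\xi^2/2})+\mathrm{Res}_n(0)(1-e^{-\xi^2/2})+(\mathrm{Res}_n(\xi)-\mathrm{Res}_n(0))$; the first bracket must compare $A_n^{(1)}(\xi)$ to $A_n^{(1)}(0)e^{-\xi^2/2}$, not to $e^{-\xi^2/2}$. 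Once either fix is made, your treatment of the main term (Taylor expansion plus the $\mathrm{Re}\le -\xi^2/6$ safety bound) and of the residual derivative (termwise differentiation, using exponential decay from $(1-x)^{\alpha(n+1)}$ and $x^{\alpha(n+1)}$, with Lemma~\ref{Lem:inequalities}-(3) keeping the $\psi_{\alpha,x}^{(j)}$ bounded below) is the right argument and matches the paper's Steps~2--4.
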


\begin{proof}
    We split the proof into several steps. 
\smallskip

    \noindent
    {\bf Step~1.} We put 
    \[
    \theta_\alpha:=\begin{cases}
        \{(\alpha-\underline{\alpha}) \wedge (\overline{\alpha}-\alpha)\}\pi
        & \text{if }\alpha \notin \N \\
        2\pi & \text{if }\alpha \in \N,
    \end{cases}
    \]
    where $\underline{\alpha}=2\lfloor \alpha/2 \rfloor$ and 
    $\overline{\alpha}=2\lceil \alpha/2 \rceil$ for $\alpha>0$.
    It follows from \cite[Theorem~2.15]{HN} that
        \begin{align}
            &\varphi_{\alpha, x}^{(n)}(\xi)\nn\\*
            &= \frac{1}{Z_{\alpha, x}^{(n)}}\left(1-x+xe^{i\xi/\alpha}\right)^{\alpha n}
            +\frac{1}{Z_{\alpha, x}^{(n)}}
            \sum_{\omega \in K_\alpha \setminus \{1\}}
            \left(1-x+x\omega e^{i\xi/\alpha}\right)^{\alpha n} \nn \\
            &\quad-\frac{1}{Z_{\alpha, x}^{(n)}}
            \frac{\alpha \sin \alpha \pi}{\pi}
            \int_0^1 t^{\alpha-1}(1-t)^{\alpha n}
            \left\{
            \frac{x^\alpha (1-x)^{\alpha(n+1)}e^{i\xi}}{\psi_{\alpha, x}^{(1)}(t, \xi)\psi_{\alpha, x}^{(2)}(t, \xi)}
            +\frac{x^{\alpha(n+1)} (1-x)^{\alpha}e^{i\xi(n-1)}}{\psi_{\alpha, x}^{(3)}(t, \xi)\psi_{\alpha, x}^{(4)}(t, \xi)}
            \right\} \, \dd t 
            \label{Eq:fractional-CF-explicit}
         \end{align}
    for $\xi \in (-\theta_\alpha, \theta_\alpha)$, where we put
        \begin{align*}
            \psi_{\alpha, x}^{(1)}(t, \xi) &= \{t(1-x)\}^\alpha - x^\alpha e^{i(\xi-\alpha \pi)}, \\
            \psi_{\alpha, x}^{(2)}(t, \xi) &= \{t(1-x)\}^\alpha - x^\alpha e^{i(\xi+\alpha \pi)}, \\
            \psi_{\alpha, x}^{(3)}(t, \xi) &= (1-x)^\alpha e^{-i(\xi+\alpha \pi)}-(tx)^\alpha, \\
            \psi_{\alpha, x}^{(4)}(t, \xi) &= (1-x)^\alpha e^{-i(\xi-\alpha \pi)}-(tx)^\alpha,
        \end{align*}
    for $t \in (0, 1)$ and $\xi \in \R$.
    In what follows, we set 
    $m_n=\mathbb{E}[S_{\alpha, x}^{(n)}]$ and 
    $v_n=\mathrm{Var}(S_{\alpha, x}^{(n)})$ for simplicity. 
    Since it holds that 
        \[
        \widetilde{\varphi}_{\alpha, x}^{(n)}(\xi)
        =\exp\left(-\frac{i\xi m_n}{\sqrt{v_n}}\right)\varphi_{\alpha, x}^{(n)}\left(\frac{\xi}{\sqrt{v_n}}\right),
        \qquad \xi \in \R,
        \]
    we have 
        \[
            \widetilde{\varphi}_{\alpha, x}^{(n)}(\xi)
            =\frac{1}{Z_{\alpha, x}^{(n)}}
            \left(I_{\alpha, x}^{(n)}(\xi) + J_{\alpha, x}^{(n)}(\xi)\right),
            \qquad |\xi|<\theta_\alpha \sqrt{v_n},
        \]
    where
        \begin{align}
            I_{\alpha, x}^{(n)}(\xi)
            &= \exp\left(-\frac{i\xi m_n}{\sqrt{v_n}}\right)
            \left\{1-x+x\exp\left(\frac{i\xi}{\alpha\sqrt{v_n}}\right)\right\}^{\alpha n},\nn\\
            J_{\alpha, x}^{(n)}(\xi)
            &= \exp\left(-\frac{i\xi m_n}{\sqrt{v_n}}\right)\Biggl[
            \sum_{\omega \in K_\alpha \setminus \{1\}}
            \left\{1-x+x\omega\exp\left(\frac{i\xi}{\alpha\sqrt{v_n}}\right)\right\}^{\alpha n} \nn\\
            &\quad-\frac{\alpha \sin \alpha \pi}{\pi}\int_0^1 
            t^{\alpha-1}(1-t)^{\alpha n} \nn\\
            &\quad\times
            \left\{
            \frac{x^\alpha (1-x)^{\alpha(n+1)}e^{i\xi/\sqrt{v_n}}}{\psi_{\alpha, x}^{(1)}(t, \xi/\sqrt{v_n})\psi_{\alpha, x}^{(2)}(t, \xi/\sqrt{v_n})}
            +\frac{x^{\alpha(n+1)} (1-x)^{\alpha}e^{i\xi(n-1)/\sqrt{v_n}}}{\psi_{\alpha, x}^{(3)}(t, \xi/\sqrt{v_n})\psi_{\alpha, x}^{(4)}(t, \xi/\sqrt{v_n})}
            \right\} \, \dd t\Biggr].
            \label{Eq:J}
        \end{align}
    Then, the equality 
        \[
        Z_{\alpha, x}^{(n)}=I_{\alpha, x}^{(n)}(0)+J_{\alpha, x}^{(n)}(0)=1+J_{\alpha, x}^{(n)}(0)
        \]
    allows us to obtain
        \begin{align}
            \left|\widetilde{\varphi}_{\alpha, x}^{(n)}(\xi) - e^{-\xi^2/2}\right| 
            &=\left|\frac{1}{Z_{\alpha, x}^{(n)}}\left\{ 
            I_{\alpha, x}^{(n)}(\xi) + J_{\alpha, x}^{(n)}(\xi)
            - \left(1+J_{\alpha, x}^{(n)}(0) \right)e^{-\xi^2/2}\right\}\right| \nn \\
            &\le C_4 \left\{\left|
            I_{\alpha, x}^{(n)}(\xi) - e^{-\xi^2/2}\right|+
            \left|J_{\alpha, x}^{(n)}(0)(1-e^{-\xi^2/2})\right|
            +\left|J_{\alpha, x}^{(n)}(\xi) - J_{\alpha, x}^{(n)}(0)\right|\right\}
            \label{Eq:CF-difference}
        \end{align}
    for $|\xi|<\theta_\alpha \sqrt{v_n}$.
\smallskip

    \noindent
    {\bf Step~2}.
    Let us estimate the term 
    $|I_{\alpha, x}^{(n)}(\xi) - e^{-\xi^2/2}|$ 
    on the right-hand side of \eqref{Eq:CF-difference}. 
    We have 
        \begin{align*}
              I_{\alpha, x}^{(n)}(\xi)
              &= \left[(1-x)\exp\left(-\frac{i\xi m_n}{\alpha n \sqrt{v_n}}\right)+x\exp\left\{\frac{i\xi}{\alpha\sqrt{v_n}}\left(1-\frac{m_n}{n}\right)\right\}\right]^{\alpha n} \nn \\
              &=\left[ (1-x) \left\{ 1-\frac{i\xi m_n}{\alpha n\sqrt{v_n}}
              -\frac{\xi^2 m_n^2}{2(\alpha n\sqrt{v_n})^2}
              +O\left( \left(\frac{\xi m_n}{\alpha n\sqrt{v_n}}\right)^3\right)\right\}\right. \nn \\
              &\qquad+x\left.\left\{1+\frac{i\xi}{\alpha\sqrt{v_n}}\left(1-\frac{m_n}{n}\right)
              -\frac{\xi^2}{2(\alpha \sqrt{v_n})^2}\left(1-\frac{m_n}{n}\right)^2+O\left( \left( \frac{\xi}{\alpha\sqrt{v_n}}\right)^3
              \left( 1-\frac{m_n}{n}\right)^3\right)\right\} \right]^{\alpha n}
        \end{align*}
    as $n \to \infty$. In view of \eqref{Eq:mean-variance-fractional}, 
    it holds that 
        \begin{align*}
            -(1-x)\frac{i\xi m_n}{\alpha n \sqrt{v_n}}
            +x\frac{i\xi}{\alpha \sqrt{v_n}}\left(1-\frac{m_n}{n}\right)
            &=\frac{i\xi}{\alpha \sqrt{v_n}}\left(x-\frac{m_n}{n}\right)\\
            &=\frac{\xi}{\sqrt{v_n}}O(e^{-\delta n})=\xi O(n^{-1/2}e^{-\delta n})
        \end{align*}
    and
        \begin{align*}
            -(1-x)\frac{\xi^2 m_n^2}{2(\alpha n\sqrt{v_n})^2} 
            - x\frac{\xi^2}{2(\alpha \sqrt{v_n})^2}\left(1-\frac{m_n}{n}\right)^2 
            &=-\frac{\xi^2}{2(\alpha\sqrt{v_n})^2}\left\{
            x(1-x)+\left(\frac{m_n}{n}-x\right)^2\right\}\\
            &=-\frac{\xi^2}{2\alpha n}+\xi^2O(n^{-1}e^{-\delta n})
        \end{align*}
    as $n \to \infty$, which leads to 
        \[
            I_{\alpha, x}^{(n)}(\xi)
            = \left\{ 1 - \frac{\xi^2}{2\alpha n}
            +(|\xi|+\xi^2)O(e^{-\delta n}) + |\xi|^3O(n^{-3/2})\right\}^{\alpha n}
        \]
    as $n \to \infty$. 
    Due to $\mathrm{Log}\,(1+t)=t+O(t^2)$ as $t \to 0$, we have 
        \begin{align*}
            \mathrm{Log}\,I_{\alpha, x}^{(n)}(\xi)
            &=\alpha n \Biggl[ - \frac{\xi^2}{2\alpha n}
            +(\xi+\xi^2)O(e^{-\delta n}) + \xi^3O(n^{-3/2}) \nn \\
            &\qquad
            +O\left( \left\{ - \frac{\xi^2}{2\alpha n}
            +(\xi+\xi^2)O(e^{-\delta n}) + \xi^3 O(n^{-3/2}) \right\}^2\right)
            \Biggr] \nn \\
            &=-\frac{\xi^2}{2}+(\xi+\xi^2)O(e^{-\delta_2 n})+\xi^3O(n^{-1/2})+\xi^4O(n^{-1})+(\xi+\xi^2)^2O(e^{-\delta_3 n})+\xi^6O(n^{-2}) \nn \\
            &=-\frac{\xi^2}{2}+\xi O(e^{-\delta_4 n})+\xi^3O(n^{-1/2})
        \end{align*}
    as $n \to \infty$ for $0 \le \xi< \theta_\alpha \sqrt{v_n}$, where we used 
    $\xi=O(n^{1/2})$ as $n \to \infty$ in the final line. 
    Thus, we can take sufficiently small $\gamma \in (0, \theta_\alpha)$ satisfying  
        \[
            \left|\mathrm{Log}\, I_{\alpha, x}^{(n)}(\xi)+\frac{\xi^2}{2}\right|
            \le C_5\xi+\frac{1}{3}\xi^2, \qquad 
            0 \le \xi \le \gamma \sqrt{n}.
        \]
    Then, Lemma~\ref{Lem:inequalities}-(2) gives 
        \[
            \left| I_{\alpha, x}^{(n)}(\xi)e^{\xi^2/2}-1\right|
            \le C_{6}\left( \xi e^{-\delta_4 n}+\xi^3n^{-1/2}\right)
            \exp\left(C_5\xi+\frac{1}{3}\xi^2\right)
        \]
    and so 
        \begin{equation}
            \left| I_{\alpha, x}^{(n)}(\xi)-e^{-\xi^2/2}\right|
            \le C_{6}\left( \xi e^{-\delta_4 n}+\xi^3n^{-1/2}\right)
            \exp\left(C_5\xi-\frac{1}{6}\xi^2\right)
            \label{Eq:estimatee-(I-e)}
        \end{equation}
    for $0 \le \xi \le \gamma \sqrt{n}$. 
\smallskip

    \noindent
    {\bf Step~3}. 
    We next deal with the estimate of $J_{\alpha, x}^{(n)}(\xi)$.
    First, suppose that $\alpha\not\in\N$.
    Since $|\xi|<\theta_\alpha$ implies $\sin(\xi \pm \alpha \pi)/2 \neq 0$,
    we apply Lemma~\ref{Lem:inequalities}-(3) to deduce
        \[
            |\{t(1-x)\}^\alpha-x^\alpha e^{i(\xi \pm \alpha \pi)}| \ge x^\alpha \left|\sin \frac{\xi \pm \alpha \pi}{2}\right|>0
        \]
    and     
        \[
            |(1-x)^\alpha e^{-i(\xi \pm \alpha \pi)}-(tx)^\alpha| \ge (1-x)^\alpha \left|\sin \frac{\xi \pm \alpha \pi}{2}\right|>0.
        \]
    Therefore, there exists some $C_{7}>0$ such that  
    $|\psi_{\alpha, x}^{(j)}(t, \xi)| \ge C_{7}$ for $t \in (0, 1)$, $|\xi| \le \theta_\alpha/2$, and $j=1, 2, 3, 4$. 
    In particular, $|\xi| \le \theta_\alpha\sqrt{v_n}/2$ implies 
        \begin{equation}
            \left|\psi_{\alpha, x}^{(n)}\left(t,\frac{\xi}{\sqrt{v_n}}\right)\right| \ge C_{7}, \qquad 
            t \in (0, 1), \ j=1, 2, 3, 4.
            \label{Eq:psi-estimate}
        \end{equation}
    If necessary, we may take $\gamma>0$ in Step~2 as satisfying 
        \[
            \gamma\sqrt{n} \le \frac{1}{2}\theta_\alpha\sqrt{v_n}, \qquad n \in \N. 
        \]
    By noting that 
        \begin{align}
            \left|1-x+x\omega \exp\left(\frac{i\xi}{\alpha \sqrt{v_n}}\right)\right|^2
            &=(1-x)^2+2x(1-x)\mathrm{Re}\left(\omega\exp\left(\frac{i\xi}{\alpha \sqrt{v_n}}\right)\right)+x^2 \nn \\
            &=1-2x(1-x)\left\{ 1-\mathrm{Re}\left(\omega\exp\left(\frac{i\xi}{\alpha \sqrt{v_n}}\right)\right)\right\} \nn \\
            &\le 1-2x(1-x)\left\{ 1-\mathrm{Re}\left(\omega\exp\left(\frac{i \theta_\alpha}{2\alpha}\right)\right)\right\}<1
            \label{Eq:estimate<1}
        \end{align}
    for $\omega \in K_\alpha \setminus \{1\}$ and $|\xi| \le \gamma \sqrt{n}$,
    we obtain
        \begin{align}
            |J_{\alpha, x}^{(n)}(\xi)|
            &\le \sum_{\omega \in K_\alpha \setminus \{1\}}
            \left| 1-x+x\omega \exp\left(\frac{i\xi}{\alpha \sqrt{v_n}}\right)\right|^{\alpha n}
            +\frac{\alpha |\sin \alpha \pi|}{\pi}\int_0^1 t^{\alpha-1}(1-t)^{\alpha n} \nn \\
            &\qquad
            \times\left\{
            \frac{x^\alpha (1-x)^{\alpha(n+1)}}{|\psi_{\alpha, x}^{(1)}(t, \xi/\sqrt{v_n})||\psi_{\alpha, x}^{(2)}(t, \xi/\sqrt{v_n})|}
            +\frac{x^{\alpha(n+1)} (1-x)^{\alpha}}{|\psi_{\alpha, x}^{(3)}(t, \xi/\sqrt{v_n})||\psi_{\alpha, x}^{(4)}(t, \xi/\sqrt{v_n})|}
            \right\} \, \dd t \nn \\
            &\le C_{8}e^{-\delta_5 n}
            \label{Eq:estimate-J}
        \end{align}
    for $n \in \N$ and $|\xi| \le \gamma \sqrt{n}$.
    The estimate of the same type holds also when $\alpha\in\N$ since \eqref{Eq:estimate<1} is valid in this case and the second term of \eqref{Eq:J} vanishes.
\smallskip

    \noindent
    {\bf Step~4}. 
    We give upper bounds of the terms $|J_{\alpha, x}^{(n)}(0)(1-e^{-\xi^2/2})|$ and $|J_{\alpha, x}^{(n)}(\xi)-J_{\alpha, x}^{(n)}(0)|$ 
    on the right-hand side of \eqref{Eq:CF-difference}. 
       By combining \eqref{Eq:estimate-J} with Lemma~\ref{Lem:inequalities}-(2), 
        \begin{equation}
            \left| J_{\alpha, x}^{(n)}(0)(1-e^{-\xi^2/2})\right|
            \le C_{9}\xi^2 e^{-\xi^2/2}e^{-\delta_5 n}  \le C_9|\xi|e^{-\delta_5 n}
            \label{Eq:estimate-J(1-e)}
        \end{equation}
    for $n \in \N$ and $|\xi| \le \gamma \sqrt{n}$.
    We also have 
        \begin{align}
            |J_{\alpha, x}^{(n)}(\xi)-J_{\alpha, x}^{(n)}(0)|
            &\le |\xi| \sup_{s \in [0 \wedge \xi, 0 \vee \xi]} \left| \frac{\dd J_{\alpha, x}^{(n)}}{\dd \xi}(s)\right|. 
            \label{Eq:estimate-JJ-1}
        \end{align}
    For $|\xi| \le \gamma \sqrt{n}$, a direct computation gives    
        \begin{align}
            \frac{\dd J_{\alpha, x}^{(n)}}{\dd \xi}(\xi)
            &=-\frac{im_n}{\sqrt{v_n}}J_{\alpha, x}^{(n)}(\xi) 
            +\exp\left(-\frac{i\xi m_n}{\sqrt{v_n}}\right) \nn \\*
            &\quad\times\Biggl[ \sum_{\omega \in K_\alpha \setminus \{1\}}
            \frac{inx\omega}{\sqrt{v_n}}\exp\left(\frac{i\xi}{\alpha\sqrt{v_n}}\right)
            \left\{1-x+x\omega \exp\left(\frac{i\xi}{\alpha \sqrt{v_n}}\right)\right\}^{\alpha n-1} \nn \\*
            &\quad
            -\frac{\alpha \sin \alpha \pi}{\pi}\int_0^1 t^{\alpha-1}(1-t)^{\alpha n}\Biggl\{\frac{x^\alpha(1-x)^{\alpha(n+1)}e^{i\xi/\sqrt{v_n}}}{\sqrt{v_n}}A_{\alpha, x}^{(n)}(t, \xi) \nn \\*
            &\quad
            +\frac{x^{\alpha(n+1)}(1-x)^{\alpha}e^{i\xi(n-1)/\sqrt{v_n}}}{\sqrt{v_n}}B_{\alpha, x}^{(n)}(t, \xi)\Biggr\}\,\dd t\Biggr],
            \label{Eq:J-derivative}
        \end{align}
    where we put 
        \begin{align*}
            A_{\alpha, x}^{(n)}(t, \xi)
            &=\frac{i}{\psi_{\alpha, x}^{(1)}(t, \xi/\sqrt{v_n})\psi_{\alpha, x}^{(2)}(t, \xi/\sqrt{v_n})}
            -\frac{(\del\psi_{\alpha, x}^{(1)}/\del \xi)(t, \xi/\sqrt{v_n})}{\psi_{\alpha, x}^{(1)}(t, \xi/\sqrt{v_n})^2\psi_{\alpha, x}^{(2)}(t, \xi/\sqrt{v_n})} \nn \\
            &\quad
            -\frac{(\del\psi_{\alpha, x}^{(2)}/\del \xi)(t, \xi/\sqrt{v_n})}{\psi_{\alpha, x}^{(1)}(t, \xi/\sqrt{v_n})\psi_{\alpha, x}^{(2)}(t, \xi/\sqrt{v_n})^2}, \nn \\
            B_{\alpha, x}^{(n)}(t, \xi)
            &=\frac{i(n-1)}{\psi_{\alpha, x}^{(3)}(t, \xi/\sqrt{v_n})\psi_{\alpha, x}^{(4)}(t, \xi/\sqrt{v_n})}
            -\frac{(\del\psi_{\alpha, x}^{(3)}/\del \xi)(t, \xi/\sqrt{v_n})}{\psi_{\alpha, x}^{(3)}(t, \xi/\sqrt{v_n})^2\psi_{\alpha, x}^{(4)}(t, \xi/\sqrt{v_n})} \nn \\
            &\quad
            -\frac{(\del\psi_{\alpha, x}^{(4)}/\del \xi)(t, \xi/\sqrt{v_n})}{\psi_{\alpha, x}^{(3)}(t, \xi/\sqrt{v_n})\psi_{\alpha, x}^{(4)}(t, \xi/\sqrt{v_n})^2}.
        \end{align*}
    By noting 
        \begin{align*}
            \frac{\del\psi_{\alpha, x}^{(1)}}{\del \xi}(t, \xi) 
            &= -ix^\alpha e^{i(\xi-\alpha \pi)}, & 
            \frac{\del\psi_{\alpha, x}^{(2)}}{\del \xi}(t, \xi) 
            &= -ix^\alpha e^{i(\xi+\alpha \pi)}, \\
            \frac{\del\psi_{\alpha, x}^{(3)}}{\del \xi}(t, \xi) 
            &= -i(1-x)^\alpha e^{-i(\xi+\alpha \pi)}, &
            \frac{\del\psi_{\alpha, x}^{(4)}}{\del \xi}(t, \xi) 
            &= -i(1-x)^\alpha e^{-i(\xi-\alpha \pi)},
        \end{align*}
    we see that 
        \begin{equation}
            \left|\frac{\del \psi_{\alpha, x}^{(j)}}{\del \xi}(\xi)\right| \le 1,
             \qquad j=1, 2, 3, 4.
             \label{Eq:psi-derivative-estimate}
        \end{equation}
    Hence, for $|\xi| \le \gamma \sqrt{n} $, it follows from 
    \eqref{Eq:psi-estimate} and \eqref{Eq:psi-derivative-estimate} that
        \begin{align*}
            |A_{\alpha, x}^{(n)}(t, \xi)| &\le \frac{1}{C_{7}^2}
            +\frac{1}{C_{7}^3}+\frac{1}{C_7^3} \le C_{10}, \\
            |B_{\alpha, x}^{(n)}(t, \xi)| &\le \frac{n-1}{C_7^2}
            +\frac{1}{C_7^3}+\frac{1}{C_7^3} \le C_{11}n.
        \end{align*}
    Precisely speaking, the exchange of derivation and integration 
    in \eqref{Eq:J-derivative} is justified by these estimates. 
    These estimates together with 
    \eqref{Eq:estimate<1} and \eqref{Eq:estimate-J} allow us to have
        \begin{align*}
            \left|\frac{\dd J_{\alpha, x}^{(n)}}{\dd \xi}(\xi)\right|
            &\le C_{12}\sqrt{n}e^{-\delta_5 n}
            +\sum_{\omega \in K_\alpha \setminus \{1\}} C_{13}\sqrt{n}e^{-\delta_5 n}
            +\frac{C_{14}}{\sqrt{n}}\int_0^1 t^{\alpha-1}(1-t)^{\alpha n}\nn \\
            &\qquad
            \times \left\{x^\alpha(1-x)^{\alpha(n+1)} C_{10} + x^{\alpha(n+1)}(1-x)^\alpha C_{11}n\right\} \, \dd t \nn \\
            &\le C_{15}\sqrt{n}e^{-\delta_5 n}
            +C_{16}\sqrt{n}e^{-\delta_6 n}\int_0^1 t^{\alpha-1} \, \dd t \nn \\
            &\le C_{17}e^{-\delta_7 n}
        \end{align*}
    for $|\xi| \le \gamma \sqrt{n}$. 
    By combining this estimate with \eqref{Eq:estimate-JJ-1}, we obtain 
        \begin{equation}
            |J_{\alpha, x}^{(n)}(\xi)-J_{\alpha, x}^{(n)}(0)|
            \le  C_{17}\xi e^{-\delta_7 n}, \qquad  0 \le \xi \le \gamma \sqrt{n}. 
            \label{Eq:estimate-JJ-2}
        \end{equation}

Consequently, it follows from \eqref{Eq:CF-difference}, 
\eqref{Eq:estimatee-(I-e)},
\eqref{Eq:estimate-J(1-e)}, and \eqref{Eq:estimate-JJ-2} that 
    \[
        \left| \widetilde{\varphi}_{\alpha, x}^{(n)}(\xi) - e^{-\xi^2/2}\right|
        \le C_{4}\left\{
        C_{6}\left( \xi e^{-\delta_4 n}+\xi^3n^{-1/2}\right)
        \exp\left(C_5\xi-\frac{1}{6}\xi^2\right)
        +(C_9+C_{17})\xi e^{-\delta_8 n}
        \right\}
    \]
for $0 \le \xi \le \gamma\sqrt{n}$, 
which is the very desired estimate. 
\end{proof}

We are now in a position to prove Theorem~\ref{Thm:Berry-Esseen}.

\begin{proof}[Proof of Theorem~\ref{Thm:Berry-Esseen}]
By making use of the Chebyshev inequality, we have 
    \begin{align*}
        \widetilde{F}_{\alpha, x}^{(n)}(z) \vee \Phi(z) &\le \frac{1}{z^2} \qquad \text{if }z\le 0, \\
        \left(1-\widetilde{F}_{\alpha, x}^{(n)}(z)\right)
        \vee (1-\Phi(z)) &\le \frac{1}{z^2} \qquad \text{if }z> 0.
    \end{align*}
Thus, we have 
    \[
        \left|\widetilde{F}_{\alpha, x}^{(n)}(z)-\Phi(z)\right| \le \frac{1}{z^2}, 
        \qquad  z \in \R. 
    \]
Since $|\widetilde{F}_{\alpha, x}^{(n)}(z)| \le 1$ 
and $|\Phi(z)|\le 1$ for all $z \in \R$, it holds that 
    \[
        \int_{\R} \left|\widetilde{F}_{\alpha, x}^{(n)}(z) - \Phi(z)\right| \, \dd z 
        \le \int_{|z|\le 1} 2 \, \dd z + \int_{|z|>1}\frac{\dd z}{z^2}=6<\infty.
    \]
Moreover, it is obvious that 
    \[
        \Phi'(z)=\frac{1}{\sqrt{2\pi}}e^{-z^2/2} \le \frac{1}{\sqrt{2\pi}}, \qquad  z \in \R. 
    \]
Hence, Lemma~\ref{Lem:CF-estimate} and Lemma~\ref{Lem:Chung} with $F=\widetilde{F}_{\alpha, x}^{(n)}$, $G=\Phi$, $M=1/\sqrt{2\pi}$,
$f(\xi)=\widetilde{\varphi}_{\alpha, x}^{(n)}(\xi)$, $g(\xi)=e^{-\xi^2/2}$, 
and $T=\gamma\sqrt{n}$
allow us to obtain 
    \begin{align*}
        \sup_{z \in \R}\left|\widetilde{F}_{\alpha, x}^{(n)}(z) - \Phi(z)\right|
        &\le \frac{2}{\pi}\int_0^{\gamma \sqrt{n}}
        \frac{\left|\widetilde{\varphi}_{\alpha, x}^{(n)}(\xi) - e^{-\xi^2/2}\right|}{\xi} \, \dd \xi + \frac{24}{\gamma\pi\sqrt{2\pi}}\frac{1}{\sqrt{n}} \nn \\
        &\le C_{18}\int_0^{\gamma\sqrt{n}}
        \left\{\left( e^{-\delta_1 n}+\xi^2n^{-1/2}\right)
        \exp\left(C_2\xi-\frac{1}{6}\xi^2\right) 
        +e^{-\delta_1 n}\right\} \, \dd \xi + \frac{C_{19}}{\sqrt{n}}
        \nn \\
        &\le C_{20}\left( e^{-\delta_1 n}+\frac{1}{\sqrt{n}}+\gamma\sqrt{n}e^{-\delta_1 n}\right)+\frac{C_{19}}{\sqrt{n}} \nn \\
        &\le \frac{C_{21}}{\sqrt{n}}
    \end{align*}
    for $n \in \N$. 
\end{proof}

\section{Concluding remarks}
\label{Sect:Remarks}

We give a few comments and supplementary propositions as concluding remarks.

\begin{enumerate}
\item[{\rm (1)}] It would be an interesting problem 
to investigate how small we can take the positive constant $C$ 
in the Berry--Esseen type estimate \eqref{Eq:BE-estimate}. 
The proof of Theorem~\ref{Thm:Berry-Esseen} shows that $C$ can be taken uniformly for $x\in[\ve,1-\ve]$ and $\alpha\in[\ve,1/\ve]\setminus\bigcup_{n\in\N}\bigl((2n-\ve,2n)\cup(2n,2n+\ve)\bigr)$ for each $\ve\in(0,1)$, but the constant given in the proof is far from the optimal one.
Let us review the classical situation for reference.
Let $\{X_{n}\}_{n=1}^{\infty}$ be a sequence of 
independent real random variables with finite third moment 
and put $\rho=\sigma^{2}/\beta$, where
    \[
        \sigma^{2}=\frac{1}{n}\sum_{k=1}^{n}\Var(X_{k})
        \quad\text{and}\quad
        \beta=\frac{1}{n}\sum_{k=1}^{n}\E\left[\left|X_{k}-\E[X_{k}]\right|^{3}\right].
    \]
Let $F_{n}(z)$ be the distribution function of $(\sqrt{n\sigma^{2}})^{-1}\sum_{k=1}^{n}(X_{k}-\E[X_{k}])$ for $n\in\N$.
Esseen obtained in \cite{Esseen} not only the bound
\[
\sup_{z\in\R}|F_{n}(z)-\Phi(z)|\le\frac{C\rho}{\sqrt{n}}, \qquad n \in \N, 
\]
but also its asymptotically optimal constant
\[
\lim_{n\rightarrow\infty}\sup_{F_{n}}\left\{\frac{\sqrt{n}}{\rho}\sup_{z\in\R}|F_{n}(z)-\Phi(z)|\right\}=\frac{3+\sqrt{10}}{6\sqrt{2}\pi}, 
\]
where $\sup_{F_{n}}$ denotes the supremum taken over all independent random variables with finite third moment. 
Since it is not expected that $\mu_{\alpha,x}^{(n)}$ can be expressed 
as the law of the sum of independent random variables, 
we need other techniques to obtain the best constant for $\mu_{\alpha,x}^{(n)}$.

\item[{\rm (2)}] 
Throughout the present paper and the previous paper~\cite{HN}, 
we have shown various asymptotic behaviors of $\mu_{\alpha,x}^{(n)}$ 
such as
\begin{enumerate}
\item[(i)]
the weak law of large numbers~(\cite[Theorem~3.1]{HN}), 
\item[(ii)] 
the central limit theorem~(\cite[Theorem~3.2]{HN}), 
\item[(iii)] 
the law of small numbers~(\cite[Theorem~3.4]{HN}), 
\item[(iv)] 
the large deviation principle~(Theorem~\ref{Thm:LDP-fractional}), 
\item[(v)] 
the moderate deviation principle~(Theorem~\ref{Thm:MDP-fractional}), and
\item[(vi)]
a Berry--Esseen type estimate (Theorem~\ref{Thm:Berry-Esseen}).
\end{enumerate}
We note that there exists a probability distribution 
derived from the binomial distribution that have similar behaviors to $\mu_{\alpha,x}^{(n)}$.
Let $X_{\alpha, x}^{(n)}$ be a random variable obeying 
the binomial distribution~$\mathrm{Bin}(\lfloor\alpha n\rfloor,x)$ 
and let $\nu_{\alpha,x}^{(n)}$ be the law of 
$X_{\alpha, x}^{(n)}/\alpha$. 
In other words, $\nu_{\alpha,x}^{(n)}$ is the 
$\lfloor\alpha n\rfloor$-times convolutions of the probability measure 
$(1-x)\delta_0+x\delta_{1/\alpha}$. 
Then, the mean and variance of $\nu_{\alpha,x}^{(n)}$ are $\lfloor\alpha n\rfloor x/\alpha$ and $\lfloor\alpha n\rfloor x(1-x)/\alpha^2$ respectively, which are asymptotically equal to those of $\mu_{\alpha,x}^{(n)}$ as $n\to\infty$ in view of \eqref{Eq:mean-variance-fractional}.
From the classical results, $\nu_{\alpha,x}^{(n)}$ has 
the limit behaviors of the same type as (i)--(vi) 
above \emph{except for} (iii). 
We also note the following similarity. 
From \eqref{Eq:fractional-CF-explicit}, 
the characteristic function $\varphi_{\alpha,x}^{(n)}(\xi)$ of $\mu_{\alpha,x}^{(n)}$ is expressed as
    \[
        \varphi_{\alpha,x}^{(n)}(\xi)=
        \left(1-x+xe^{i\xi/\alpha}\right)^{\alpha n}\left(1+O(e^{-\delta n})\right)+O(e^{-\delta n}),
        \qquad \xi\in(-\theta_{\alpha},\theta_{\alpha}),
    \]
as $n\rightarrow\infty$ for some $\delta>0$. 
On the other hand, the characteristic function $\psi_{\alpha,x}^{(n)}(\xi)$ of $\nu_{\alpha,x}^{(n)}$ 
is given by
\[
\psi_{\alpha,x}^{(n)}(\xi)={\left(1-x+xe^{i\xi/\alpha}\right)}^{\lfloor\alpha n\rfloor}, \qquad \xi\in\R.
\]
Therefore, $\varphi_{\alpha,x}^{(n)}$ is close to $\psi_{\alpha,x}^{(n)}$ 
on the interval $(-\theta_{\alpha},\theta_{\alpha})$ 
when $n$ is sufficiently large. 
These observations may allow us to say that $\mu_{\alpha,x}^{(n)}$ is close to $\nu_{\alpha,x}^{(n)}$ when $n$ is sufficiently large, 
although the support $\{0,1,2,\dots,n\}$ of $\mu_{\alpha,x}^{(n)}$ 
is different from the support 
$\{0,1/\alpha,2/\alpha,\dots,\lfloor\alpha n\rfloor/\alpha\}$ 
of $\nu_{\alpha,x}^{(n)}$. 
\end{enumerate}

As final remarks, we note the following estimates on the moments 
and the distribution functions of $\mu_{\alpha,x}^{(n)}$ 
and $\nu_{\alpha,x}^{(n)}$.

    \begin{pr}\label{Prop:difference_of_moments}
        Let $m \in \N$. Then, we have 
            \begin{equation}\label{Eq:difference_of_moments}
                \int_{\R}z^m\,\mu_{\alpha,x}^{(n)}(\dd z)-\int_{\R}z^m\,\nu_{\alpha,x}^{(n)}(\dd z)
                =O(n^{m-1})
            \end{equation}
        as $n \to \infty$. 
    \end{pr}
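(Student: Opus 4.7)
The plan is to compute both moments by differentiating the corresponding moment generating functions $m$ times at $\xi=0$. For $\nu_{\alpha,x}^{(n)}$, the moment generating function is elementarily $(1-x+xe^{\xi/\alpha})^{\lfloor\alpha n\rfloor}$. For a real parameter $N\ge0$, set $P_m(N):=\frac{\dd^m}{\dd\xi^m}(1-x+xe^{\xi/\alpha})^{N}\big|_{\xi=0}$. Expanding $e^{N\log(1-x+xe^{\xi/\alpha})}$ via the exponential formula (Fa\`a di Bruno), $P_m(N)$ is a polynomial in $N$ of degree $m$ with leading coefficient $(x/\alpha)^m$, whose remaining coefficients depend only on $\alpha$, $x$, and $m$. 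Thus $\int_\R z^m\,\nu_{\alpha,x}^{(n)}(\dd z) = P_m(\lfloor\alpha n\rfloor)$.

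For $\mu_{\alpha,x}^{(n)}$, I would use Theorem~\ref{Thm:MGF-fractional-binomial} to write $M_{\alpha,x}^{(n)}(\xi) = (Z_{\alpha,x}^{(n)})^{-1}\bigl(A(\xi)+B(\xi)-C(\xi)\bigr)$, where $A(\xi)=(1-x+xe^{\xi/\alpha})^{\alpha n}$ is the dominant term, $B(\xi)$ collects the finite sum over $\omega\in K_\alpha\setminus\{1\}$, and $C(\xi)$ is the integral term (absent if $\alpha\in\N$). Then $A^{(m)}(0)=P_m(\alpha n)$. The core step is to show that both $B^{(m)}(0)$ and $C^{(m)}(0)$ are exponentially small in $n$: for $B$, each base $1-x+x\omega$ has modulus strictly less than $1$, so $m$-fold differentiation produces only a polynomial-in-$n$ prefactor that is absorbed by $\rho^{\alpha n}$ for some $\rho<1$; for $C$, differentiating under the integral sign yields an integrand still multiplied by $(1-t)^{\alpha n}$ and by $(1-x)^{\alpha(n+1)}$ or $x^{\alpha(n+1)}$, with denominators bounded away from zero by the mechanism behind \eqref{Eq:lower-bound-1}--\eqref{Eq:lower-bound-2}, so the resulting quantity is again $O(n^{m}e^{-\delta n})$. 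Combined with $Z_{\alpha,x}^{(n)}=1+O(e^{-\delta n})$ from \eqref{Eq:Z}, I would conclude that $\int_\R z^m\,\mu_{\alpha,x}^{(n)}(\dd z) = P_m(\alpha n)+O(e^{-\delta' n})$ for some $\delta'>0$.

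Subtracting then reduces the left-hand side of \eqref{Eq:difference_of_moments} to $P_m(\alpha n)-P_m(\lfloor\alpha n\rfloor)+O(e^{-\delta' n})$. Since $P_m$ has degree $m$ and $|\alpha n-\lfloor\alpha n\rfloor|<1$, the mean value theorem gives $|P_m(\alpha n)-P_m(\lfloor\alpha n\rfloor)|\le\sup_{s\in[\lfloor\alpha n\rfloor,\alpha n]}|P_m'(s)|=O(n^{m-1})$, establishing \eqref{Eq:difference_of_moments}. The main obstacle is the careful justification of the estimate on $C^{(m)}(0)$: $m$-fold differentiation under the integral brings down negative powers of the denominators up to order $m+1$, together with factors of $n^m$ coming from the $e^{\xi n}$ piece, and one must check that these growth factors are uniformly dominated for $\xi$ in a neighborhood of $0$, both to legitimize the exchange of derivation and integration and to ensure the final decay $O(n^{m}e^{-\delta n})$ is genuine.
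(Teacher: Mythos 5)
Your proof is correct but takes a genuinely different route from the paper's. The paper invokes \cite[Proposition~2.6]{HN}, a ready-made closed form for the raw moments of $\mu_{\alpha,x}^{(n)}$ stating that $\alpha\sum_{j=0}^n(\alpha j)^m\binom{\alpha n}{\alpha j}x^{\alpha j}(1-x)^{\alpha(n-j)}=\sum_{k=0}^m W_0(m,k)(\alpha n)_k x^k+O(e^{-\delta n})$, pairs it with the classical binomial moment identity at $N=\lfloor\alpha n\rfloor$, and concludes from $(\alpha n)_k-(\lfloor\alpha n\rfloor)_k=O(n^{k-1})$ together with $Z_{\alpha,x}^{(n)}=1+O(e^{-\delta' n})$. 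You instead differentiate the explicit moment generating function of Theorem~\ref{Thm:MGF-fractional-binomial} $m$ times at the origin, showing directly that the contributions of the sum over $K_\alpha\setminus\{1\}$ and of the integral term are $O(n^m e^{-\delta n})$, so the dominant piece is $P_m(\alpha n)$ for the same polynomial $P_m$ whose value at $\lfloor\alpha n\rfloor$ gives the $m$-th moment of $\nu_{\alpha,x}^{(n)}$. The concluding step is structurally the same in both proofs: a degree-$m$ polynomial evaluated at two points differing by less than $1$ differs by $O(n^{m-1})$, whether one phrases this via the mean value theorem as you do or via the falling-factorial increment as the paper does. Your route is more self-contained within the present paper's machinery but carries the extra burden—which you correctly flag—of justifying the exchange of differentiation and integration in the $C$-term and verifying that the denominators $\psi_{\alpha,x}^{(j)}$ stay uniformly bounded below near $\xi=0$; the paper avoids this by outsourcing the moment computation to its prequel.
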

    
\begin{proof}
    From \cite[Proposition~2.6]{HN}, 
    there exists $\delta>0$ such that
        \begin{equation}\label{Eq:1}
            \alpha \sum_{j=0}^n (\alpha j)^m 
            \binom{\alpha n}{\alpha j}x^{\alpha j}(1-x)^{\alpha(n-j)}
            =\sum_{k=0}^m W_0(m, k)(\alpha n)_kx^k + O(e^{-\delta n})
        \end{equation}
    as $n\to\infty$. 
    Here, $W_0(m, k)$ is some constant defined in \cite[Lemma~2.3]{HN}, and for $s>0$,
    \[
    (s)_0=1,\quad (s)_k=s(s-1)\cdots(s-k+1),\quad k\in\N.
    \]
    On the other hand, from a classical result (or by applying \cite[Theorem~2.5]{HN} with $n$ and $\alpha$ replaced by $\lfloor\alpha n\rfloor$ and $1$, respectively), we have 
        \begin{equation}\label{Eq:2}
            \sum_{j=0}^{\lfloor\alpha n\rfloor} j^m 
            \binom{\lfloor\alpha n\rfloor}{j}x^{j}(1-x)^{\lfloor\alpha n\rfloor-j}
            =\sum_{k=0}^m W_0(m, k)(\lfloor\alpha n\rfloor)_kx^k.
        \end{equation}
    By combining \eqref{Eq:1} and \eqref{Eq:2}, it follows that
        \[
            Z_{\alpha,x}^{(n)}\alpha^m\int_{\R}z^m\,\mu_{\alpha,x}^{(n)}(\dd z)-\alpha^m\int_{\R}z^m\,\nu_{\alpha,x}^{(n)}(\dd z)
            =\sum_{k=1}^m W_0(m, k)\{(\alpha n)_k-(\lfloor\alpha n\rfloor)_k\}x^k+ O(e^{-\delta n})
        \]
    as $n \to \infty$.
    Since $Z_{\alpha,x}^{(n)}=1+O(e^{-\delta' n})$  
    for some $\delta'>0$ 
    and $(\alpha n)_k-(\lfloor\alpha n\rfloor)_k=O(n^{k-1})$ 
    for $k\in\N$ as $n \to \infty$, we obtain    \eqref{Eq:difference_of_moments}.
\end{proof}

\begin{pr}\label{Prop:difference_of_dis_ftns}
    Let $F_{\alpha,x}^{(n)}$ and $G_{\alpha,x}^{(n)}$ be the distribution functions of $\mu_{\alpha,x}^{(n)}$ and $\nu_{\alpha,x}^{(n)}$, respectively. Then, there exists a constant $C>0$ such that
        \[
            \sup_{z\in\R}\left|F_{\alpha,x}^{(n)}(z)-G_{\alpha,x}^{(n)}(z)\right|
            \le \frac{C}{\sqrt{n}},
            \qquad n\in\N.
        \]
\end{pr}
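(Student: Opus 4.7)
The plan is to invoke two Berry--Esseen type estimates --- the one just established (Theorem~\ref{Thm:Berry-Esseen}) for $\mu_{\alpha,x}^{(n)}$ and the classical one for $\nu_{\alpha,x}^{(n)}$ --- and then compare the two Gaussian approximations directly. Let $m_n^\mu$, $v_n^\mu$ denote the mean and variance of $\mu_{\alpha,x}^{(n)}$, and $m_n^\nu=\lfloor\alpha n\rfloor x/\alpha$, $v_n^\nu=\lfloor\alpha n\rfloor x(1-x)/\alpha^2$ those of $\nu_{\alpha,x}^{(n)}$. Since $F_{\alpha,x}^{(n)}(z)=\widetilde{F}_{\alpha,x}^{(n)}((z-m_n^\mu)/\sqrt{v_n^\mu})$, Theorem~\ref{Thm:Berry-Esseen} immediately yields
\[
\sup_{z\in\R}\Bigl|F_{\alpha,x}^{(n)}(z)-\Phi\Bigl(\tfrac{z-m_n^\mu}{\sqrt{v_n^\mu}}\Bigr)\Bigr|\le\frac{C}{\sqrt{n}},
\]
and since $\nu_{\alpha,x}^{(n)}$ is the law of $\alpha^{-1}$ times a sum of $\lfloor\alpha n\rfloor$ i.i.d.\ Bernoulli$(x)$ random variables, the classical Berry--Esseen theorem gives
\[
\sup_{z\in\R}\Bigl|G_{\alpha,x}^{(n)}(z)-\Phi\Bigl(\tfrac{z-m_n^\nu}{\sqrt{v_n^\nu}}\Bigr)\Bigr|\le\frac{C}{\sqrt{n}}.
\]

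By the triangle inequality, the task reduces to bounding $\sup_{z\in\R}|\Phi((z-m_n^\mu)/\sqrt{v_n^\mu})-\Phi((z-m_n^\nu)/\sqrt{v_n^\nu})|$ by $C/\sqrt{n}$. Using \eqref{Eq:mean-variance-fractional} together with $\lfloor\alpha n\rfloor=\alpha n+O(1)$, one checks $m_n^\mu-m_n^\nu=O(1)$ and $v_n^\mu-v_n^\nu=O(1)$, while both variances are of order $n$; in particular $\sqrt{v_n^\mu}-\sqrt{v_n^\nu}=O(n^{-1/2})$. Splitting the difference by first changing the mean and then the variance, the mean-change contribution is controlled by $|m_n^\mu-m_n^\nu|/\sqrt{2\pi v_n^\mu}=O(n^{-1/2})$ via the uniform bound $|\Phi'|\le 1/\sqrt{2\pi}$. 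For the variance-change contribution, I would apply the mean value theorem to $\sigma\mapsto\Phi((z-m_n^\nu)/\sigma)$: its derivative equals $-\Phi'((z-m_n^\nu)/\sigma)(z-m_n^\nu)/\sigma^2$, and writing $t=(z-m_n^\nu)/\sigma$ the prefactor becomes $-t\Phi'(t)/\sigma$, which is uniformly bounded in $z$ because $\sup_{t\in\R}|t\Phi'(t)|<\infty$. This yields a variance-change bound of order $|\sqrt{v_n^\mu}-\sqrt{v_n^\nu}|/\sqrt{v_n^\nu}=O(1/n)$.

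The main (although routine) obstacle is precisely this variance-change comparison: a naive Lipschitz estimate on $\Phi$ would leave an unbounded factor $|z-m_n^\nu|$, so one must exploit the exponential decay of the Gaussian density to absorb it, and for that the elementary boundedness of $t\Phi'(t)$ is essential. Once this point is addressed, combining the three estimates via the triangle inequality gives the proposition.
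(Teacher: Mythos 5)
Your proposal is correct and follows essentially the same route as the paper: combine Theorem~\ref{Thm:Berry-Esseen} with the classical Berry--Esseen estimate for $\nu_{\alpha,x}^{(n)}$, then compare the two Gaussian approximations by splitting into a mean shift (controlled by the Lipschitz constant $\sup|\Phi'|$) and a scale shift (controlled by the uniform boundedness of $|w|\Phi'(w)$). The paper works with normalized variables and writes the Gaussian comparison as $\sup_z|\Phi(z)-\Phi(\ve_n+(1+s_n)z)|$ with $\ve_n,s_n=O(n^{-1/2})$, but the underlying estimates and the key observation about $|w|\Phi'(w)$ are identical to yours.
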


\begin{proof}
Recall that $\widetilde{F}_{\alpha, x}^{(n)}(z)$ denotes the distribution function of $\widetilde{\mu}_{\alpha, x}^{(n)}$
that is the law of $\widetilde{S}_{\alpha, x}^{(n)}$ (see \eqref{Eq:normalized-mu}). 
We have
    \begin{align}
        \sup_{z\in\R}\left|F_{\alpha,x}^{(n)}(z)
        -G_{\alpha,x}^{(n)}(z)\right| 
        &=\sup_{z\in\R}\left|\widetilde F_{\alpha,x}^{(n)}(z)-G_{\alpha,x}^{(n)}\left(\E[S_{\alpha,x}^{(n)}]+ \sqrt{\Var(S_{\alpha,x}^{(n)})}z\right)\right| \nn \\
        &=\sup_{z\in\R}\left|\widetilde F_{\alpha,x}^{(n)}(z)-\Phi(z)\right|+\sup_{z\in\R}\left|G_{\alpha,x}^{(n)}
        \left(\E[S_{\alpha,x}^{(n)}]
        +\sqrt{\Var(S_{\alpha,x}^{(n)})}z\right)-\Phi(z)\right|.
        \label{Eq:F-G-estimate-1}
    \end{align}
The first term on the right-hand side of \eqref{Eq:F-G-estimate-1}
is $O(1/\sqrt{n})$ 
from Theorem~\ref{Thm:Berry-Esseen}.
The second term on the right-hand side of \eqref{Eq:F-G-estimate-1} is equal to
    \begin{align}
        &\sup_{z'\in\R}\left|G_{\alpha,x}^{(n)}\left(\lfloor\alpha n\rfloor x/\alpha+ \sqrt{\lfloor\alpha n\rfloor x(1-x)/\alpha^2}z'\right)\right.\nonumber\\
        &\phantom{\sup_{z'\in\R}}-\left.\Phi\left(\frac{\lfloor\alpha n\rfloor x/\alpha-\E[S_{\alpha,x}^{(n)}]}{\sqrt{\Var( S_{\alpha,x}^{(n)})}}+\sqrt{\frac{\lfloor\alpha n\rfloor x(1-x)/\alpha^2}{\Var(S_{\alpha,x}^{(n)})}}z'\right)\right|\nonumber\\
        &\le \sup_{z\in\R}\left|G_{\alpha,x}^{(n)}\left(\lfloor\alpha n\rfloor x/\alpha+ \sqrt{\lfloor\alpha n\rfloor x(1-x)/\alpha^2}z\right)-\Phi(z)\right|\nonumber\\
        &\quad+\sup_{z\in\R}\left|\Phi(z)-\Phi\left(\frac{\lfloor\alpha n\rfloor x/\alpha-\E[S_{\alpha,x}^{(n)}]}{\sqrt{\Var(S_{\alpha,x}^{(n)})}}+\sqrt{\frac{\lfloor\alpha n\rfloor x(1-x)/\alpha^2}{\Var(S_{\alpha,x}^{(n)})}}z\right)\right|.
        \label{Eq:F-G}
    \end{align}
The first term on the right-hand side of \eqref{Eq:F-G} is $O(1/\sqrt{n})$ from the classical Berry--Esseen type estimate. 
We give an upper bound of the second term on the right-hand side 
of \eqref{Eq:F-G}.
We write 
\[
\ve_n=\frac{\lfloor\alpha n\rfloor x/\alpha-\E[S_{\alpha,x}^{(n)}]}{\sqrt{\Var(S_{\alpha,x}^{(n)})}}
\quad\text{and}\quad
s_n=\sqrt{\frac{\lfloor\alpha n\rfloor x(1-x)/\alpha^2}{\Var(S_{\alpha,x}^{(n)})}}-1.
\]
Then, $\ve_n=O(1/\sqrt{n})$ and $s_n=O(1/\sqrt{n})$ as $n\to\infty$.
The second term on the right-hand side of \eqref{Eq:F-G} is dominated by
\begin{equation}\label{Eq:F-G2}
\sup_{z\in\R}\left|\Phi(z)-\Phi\left((1+s_n) z\right)\right|
+\sup_{z \in\R}\left|\Phi((1+s_n)z)-\Phi(\ve_n+(1+s_n)z)\right|.
\end{equation}
The second term of \eqref{Eq:F-G2} is $O(\ve_n)=O(1/\sqrt{n})$ from the Lipschitz continuity of $\Phi$.
Concerning the first term, we may assume $|s_n|\le 1/2$.
Then, 
\begin{align*}
|\Phi(z)-\Phi((1+s_n) z)|
&\le |s_n z|\sup_{w\in[z \wedge(1+s_n)z, z\vee(1+s_n)z]}\Phi'(w)\\
&\le \frac{|s_n|}{1\wedge(1+s_n)}\sup_{w\in\R}|w|\Phi'(w)\\
&\le 2|s_n|\sup_{w\in\R}|w|\Phi'(w).
\end{align*}
Since $|w|\Phi'(w)$ is bounded on $\R$, we obtain that 
    \[
    \sup_{z\in\R}|\Phi(z)-\Phi((1+s_n)z)|=O(s_n)=O\left(\frac{1}{ \sqrt{n}}\right)
    \]
$n \to \infty$. 
Thus, we have the desired estimate.
\end{proof}

\begin{acks}[Acknowledgments]
The first-named author was supported by 
JSPS KAKENHI Grant Numbers 19H00643 and 25K07056. 
The second-named author was supported by
JSPS KAKENHI Grant Number 23K12986.
\end{acks}


\begin{thebibliography}{Chu01}
        \bibitem[Chu01]{Chung}
        K.~L.~Chung:
        A Course in Probability Theory,
        Third Edition, 
        Academic Press, Inc., San Diego, CA, 2001.
        \bibitem[DZ98]{DZ}
        A.~Dembo and O.~Zeitouni:
        Large deviations, Techniques and Applications, 
        Appl.~Math.~(N.~Y.) {\bf 38}, 
        Springer--Verlag, New York, 1998.
        \bibitem[Ess56]{Esseen}
        C.~G.~Esseen: \textit{A moment inequality with an application to the central limit theorem}, 
        Skand.~Aktuarietidskr.~\textbf{39} (1956), 160--170.
        \bibitem[HH10]{HH}
        K.~Hara and M.~Hino:
        \textit{Fractional order Taylor's series and the neo-classical inequality},
        Bull.\ London Math.\ Soc.~{\bf 42} (2010), 467--477. 
        \bibitem[HN24]{HN}
        M.~Hino and R.~Namba:
        \textit{Fractional binomial distributions induced by the generalized binomial theorem and their applications},
        arXiv preprint (2024), 47 pages, available at {\tt arXiv:2408.12011}.
        \bibitem[Lyo98]{Lyons}
        T.~J.~Lyons:
        \textit{Differential equations driven by rough signals},
        Rev.\ Mat.\ Iberoamericana~{\bf 14} (1998), 215--310.
\end{thebibliography}
\end{document}